\pgfplotsset{compat=1.15}
\newtheorem{theorem}{Theorem}[section]
\newtheorem*{theorem*}{Theorem}
\newtheorem{corollary}[theorem]{Corollary}
\newtheorem{lemma}[theorem]{Lemma}
\newtheorem{proposition}[theorem]{Proposition}
\theoremstyle{remark}
 \newtheorem{remark}[theorem]{Remark}}
\theoremstyle{definition}
 \newtheorem{definition}[theorem]{Definition}
 \newtheorem{example}[theorem]{Example}
\def\Z{\mathbb{Z}}
\def\KK{\mathbf{k}}
\newcommand{\ZZ}[0]{\ensuremath{\mathbb{Z}}}
\newcommand{\GA}[0]{\ensuremath{\mathbb{G}_{\mathrm{a}}}}
\newcommand{\GM}[0]{\ensuremath{\mathbb{G}_{\mathrm{m}}}}
\newcommand{\AF}[0]{\ensuremath{\mathbb{A}}}
\newcommand{\RR}[0]{\ensuremath{\mathbb{R}}}
\newcommand{\QQ}[0]{\ensuremath{\mathbb{Q}}}
\newcommand{\ra}{\rangle}
\newcommand{\la}{\langle}
\newcommand{\pr}[0]{\ensuremath{\operatorname{pr}}}
\newcommand{\spec}[0]{\ensuremath{\operatorname{Spec}}}
\newcommand{\Aut}[0]{\ensuremath{\operatorname{Aut}}}
\newcommand{\rank}[0]{\ensuremath{\operatorname{rank}}}
\newcommand{\cone}[0]{\ensuremath{\operatorname{cone}}}
\newcommand{\homo}[0]{\ensuremath{\operatorname{Hom}}}
\newcommand*\bigcdot{\mathpalette\bigcdot@{.5}} \newcommand*\bigcdot@[2]{\mathbin{\vcenter{\hbox{\scalebox{#2}{$\m@th#1\bullet$}}}}} \makeatother
\begin{document}

\title[On the automorphism group of non-necessarily normal affine toric varieties]{On the automorphism group of \\ non-necessarily normal affine toric varieties}

\author{Roberto D\'iaz}
\address{Instituto de Investigaci\'on Interdisciplinaria, Universidad de Talca,
 Casilla 721, Talca, Chile.}%
\email{robediaz@utalca.cl}

\author{Alvaro Liendo} %
\address{Instituto de Matem\'atica y F\'\i sica, Universidad de Talca,
 Casilla 721, Talca, Chile.}%
\email{aliendo@inst-mat.utalca.cl}

\date{\today}

\thanks{{\it 2000 Mathematics Subject
   Classification}: 14M25; 20M14; 14L99.\\
 \mbox{\hspace{11pt}}{\it Key words}: locally nilpotent derivations, commutative monoids, affine toric varieties, automorphism groups.\\
 \mbox{\hspace{11pt}} The first author was also partially supported by CONICYT-PFCHA/Doctorado Nacional/2016-folio 21161165. The second author was partially supported by Fondecyt project 1200502.}

\begin{abstract}
Our main result is the following: let $X$ be a normal affine toric surface without torus factor. Then there exists a non-normal affine toric surface $X'$ with automorphism group isomorphic to the automorphism group of $X$ if and only if $X$ is different from the affine plane. As a tool, we first provide a classification of normalized additive group actions on a non-necessarily normal affine toric variety $X$ of any dimension. Recall that normalized additive group actions on $X$ are in correspondence with homogeneous locally nilpotent derivations on the algebra of regular functions of $X$. More generally, we provide a classification of homogeneous locally nilpotent derivations on the semigroup algebra of a commutative cancellative monoid. 
\end{abstract}

\maketitle

%\tableofcontents

\section*{Introduction}

Let $\KK$ be an algebraically closed field of characteristic $0$. We denote by $\GA$ and $\GM$ the additive group and the multiplicative group over $\KK$, respectively. Furthermore, we let $T$ be the algebraic torus $T\simeq\mathbb{G}^n_m$, for some integer $n>0$. A toric variety is an irreducible variety $X$ containing a torus $T$ as a Zariski open subset and such that the action of $T$ on itself extends to an algebraic action of $T$ on $X$. There is a well known correspondence between normal affine toric varieties of dimension $n$ and strongly convex polyhedral cones in $\mathbb{Q}^{n}$. This correspondence extends to a duality of categories once morphisms are properly defined and, moreover, extends to non-affine toric varieties by taking certain collections on cones called fans \cite{oda1983convex,fulton1993introduction,cox2011toric}. 

On the toric side, toric morphisms are regular maps that restrict to group homomorphisms in the corresponding acting tori \cite[Proposition 1.3.13]{cox2011toric}. If we drop the normality assumption, then there is still a duality of categories between affine toric varieties with toric morphisms and affine monoids with semigroup homomorphisms \cite[Theorem 1.1.17]{cox2011toric}. Recall that an affine monoid is a finitely generated semigroup with identity that admits an embedding into $\ZZ^n$ for some positive integer $n$. These correspondences between toric varieties and combinatorial objects have allowed for toric varieties to turn into an important area of algebraic geometry, mostly as a source of new examples and testing ground for new theories.

Motivated by the study of the Cremona Groups, Demazure first defined toric varieties in its seminal paper \cite{demazure1970sous}. In this paper, Demazure also introduced a combinatorial gadget that classifies regular $\GA$-actions on  toric varieties that are normalized by the acting torus, see also \cite[Section~1]{liendo2021automorphisms} and \cite[Section~1.3]{ArLi12} for the affine case. Nowadays, this gadget is known as the  Demazure roots. The Demazure roots are in one-to-one correspondence with normalized $\GA$-actions on a toric variety $X$ via the derivation obtained from the $\GA$-action $\GA\times X\to X$ regarded as the flow of a vector field on $X$. In the affine case, such derivations correspond to homogeneous locally nilpotent derivations on the ring of regular functions $\KK[X]$ and the Demazure roots correspond to the degree of such derivations. Recall that a derivation $\partial\colon\KK[X]\to\KK[X]$ is called locally nilpotent if for every $f\in \KK[X]$ there exists an integer $n$ such that $\partial^n(f)=0$. The second named author of this article presented in \cite{liendo2010affine} a modern account of this correspondence that has proved being a useful tool in the study of the automorphism groups of affine toric varieties, see for instance \cite{A13,liendo2018characterization, arzhantsev2019infinite, regeta2021characterizing,kraft2021affine,BoGa21}. 

The main result in \cite[Theorem 1.3]{liendo2018characterization} is the following. Let $X$ be an affine toric surface and let $X'$ be a normal affine surface. If the automorphism groups of $X$ and $X'$ are isomorphic as abstract groups, then $X$ and $X'$ are isomorphic as algebraic varieties. Also in \cite[Proposition 6.2]{liendo2018characterization}, it is proven that the assumption of normality of $X'$ can be dropped if $X$ is the affine plane. On the other hand, in \cite[Proposition 9.1]{regeta2021characterizing2} a counter-example is given if we drop the normality assumption with a particular choice of a normal affine toric surface taking the role of $X$. Our main theorem in this paper states that for any normal affine toric surface $X$ without torus factor different from the affine space, there exists a non-normal affine toric surface $X'$ with automorphism group isomorphic to the automorphism group of $X$, see Theorem \ref{main-surface}. 

Our main result is obtained after generalizing the notion of Demazure root described above to the case of cancellative monoids that are non-necessarily affine nor saturated, see Definition \ref{definition-root}. This allows us to provide a full classification of homogeneous locally nilpotent derivations on the semigroup algebras of cancellative monoids generalizing the one given in \cite{liendo2010affine}, see Theorem~\ref{theorem:equivalence}. This generalization is not straightforward and new techniques are introduced along the way.

\medskip

The paper is organized as follows. In Section~\ref{section 1} we collect and review the essential definitions and results on cancellative monoids, locally nilpotent derivation and affine toric varieties.
Section~\ref{section 2} is devoted to the study of homogeneous locally nilpotent derivations on semigroup algebras of cancellative monoids. In Section~\ref{section 3}, we generalize the notion of Demazure root to the case of cancellative monoids. Finally, in Section~\ref{section 4}, we prove our main result concerning affine toric surfaces.

\subsection*{Acknowledgments}
We would like to thank Andriy Regeta for valuable suggestions on applying Lemma~\ref{AZ-aut-toric-surface} to prove Theorem~\ref{main-surface}. We would also like to thanks both referees. Their reports helped us to improve the quality of this paper and correct inaccuracies.

\section{Preliminaries}\label{section 1}
 
In this section we recall the necessary notions of commutative cancellative monoid, homogeneous locally nilpotent derivations, affine toric geometry and related concepts needed for this paper.

\subsection{Cancellative monoids}\label{sec:monoids} 

A semigroup is a non-empty set $S$ with an associative binary operation $+\colon S\times S\rightarrow S$ where $+(m,m')$ will be denoted by $m+m'$. An element $0\in S$ is an identity if for every $m\in S$, the equations $0+m=m$ and $m+0=m$ hold. If such identity exists it is unique. Semigroups with identity are called monoids. A monoid is called commutative if $m+m'=m'+m$ for all $m,m'\in S$. All monoids in this paper will be assumed to be commutative without explicitly mention it. A monoid is cancellative if for every $m,m',m''\in S$ with $m+m'=m+m''$ we have $m'=m''$. An affine monoid is a finitely generated commutative monoid $S$ that admits an  embedding in $\ZZ^{n}$ for some integer $n\geq 0$.

By \cite[Theorem 3.10]{nagy2001special}, a commutative monoid $S$ is cancellative if and only if $S$ can be embedded in a group. Furthermore, a minimal such group $M_S$ can be obtained as follows. Let $\sim$ be the equivalence relation on $S\times S$ given by $(m_1,m_2)\sim(m_1',m_2')$ if and only if $m_1+m'_2=m'_1+m_2$ in $S$.  We denote the class of $(m_1,m_2)$ in $(S\times S)/\sim$ by $m_1-m_2$. Then $M_S=\{m_1-m_2\mid m_1,m_2\in S\}$.
The identity in $M_S$ is $m-m$ for any $m\in S$ and $m_2-m_1$ is the inverse of $m_1-m_2$. An embedding of $S$ in $M_S$ is given by $m\mapsto m-0$. We will always regard $S$ as a subset of $M_S$ via this embedding and, if no confusion arises, we will denote $M_S$ simply by $M$. Let now $N=\homo(M,\ZZ)$ be the group dual to $M$. We define the dual monoid $S^*$ of $S$ as
$$S^*=\{u\in N\mid u(m)\geq 0,\mbox{ for all } m\in S\}\,.$$

A face of a monoid $S$ is a submonoid $F$ satisfying that $m_1+m_2\in F$ implies $m_1,m_2\in F$. The trivial monoid $0$ is always a face of the dual monoid $S^*$. Let now $S$ be a monoid having $0$ as a face. A face of $S$ isomorphic to $\ZZ_{\geq 0}$ is called a ray of $S$. Let $R$ be a ray of $S$, we define the primitive element of $R$ as the unique generator $\rho$ of $R$ as monoid. We denote by $S(1)$ the set of primitive elements of all the rays of $S$. The saturation of a monoid $S$ is the submonoid $S^{sat}$ of $M$ of the elements $a$ verifying $k\cdot a$ is an element in $S$ for some positive integer $k$.

\subsection{Algebras graded by monoids}

Let $S$ be a monoid. An $S$-graded algebra is an algebra $B$ with a direct sum decomposition $B=\bigoplus_{m\in S}B_m$ where $B_m$ are $\KK$-submodules and $B_{m}B_{m'}\subset B_{m+m'}$. The submodules $B_m$ are called the homogeneous components of $B$. We say that $f\in B_m$ is a homogeneous element of degree $m$. Moreover an $S$-graded algebra can also be regarded as an $M_S$-graded algebra by setting $B_m=\{0\}$ whenever $m\in M_S\setminus S$. The most natural $S$-graded algebra that we can associate to a monoid is the semigroup algebra defined as follows. Given a monoid $S$, we let $\chi^m$ be new symbols for every $m\in S$ and we define the semigroup algebra $\KK[S]$ as 
$$\KK[S]=\bigoplus_{m\in S}\KK\cdot \chi^m\quad\mbox{with multiplication rule given by}\quad \chi^m\cdot\chi^{m'}=\chi^{m+m'}\mbox{ and }\chi^0=1\,.$$

\subsection{Homogeneous locally nilpotent derivation}\label{subsection:homogenous derivation}

Let $B$ be a commutative $\KK$-algebra. A $\KK$-de\-riva\-tion on $B$ is a linear map $\partial\colon B\rightarrow B$ such that $\partial(\KK)=0$ and for every $f, g\in B$ we have $\partial(fg)=f\partial(g)+g\partial(f)$. This last condition is known as the Leibniz rule. All our algebras will be $\KK$-algebras and all our derivations will be $\KK$-derivations, hence we will drop $\KK$ from the notation. A derivation on $B$ is said to be locally nilpotent if for every $f\in B$ there exists a non-negative integer $n$ depending on $f$ such that $\partial^{n}(f)=0$, where $\partial^{n}$ denotes the $n$th iteration of $\partial$. 
   
Locally nilpotent derivations have a geometric counterpart. If $X$ is an affine variety and $\KK[X]$ is the ring of regular functions, there exists a correspondence between locally nilpotent derivations on $\KK[X]$ and regular additive group actions on $X$. The correspondence is defined as follows, see \cite[Section~1.5]{freudenburg2006algebraic} for details. Let $\mu\colon \GA\times X \to X$ be a regular $\GA$-action on $X$. We associate a locally nilpotent derivation $\partial$ to $\mu$ via:
$$\partial_{\mu}\colon \KK[X]\to\KK[X] \quad\mbox{defined by}\quad f\mapsto \left[\dfrac{d}{dt}\circ\mu^*(f)\right]_{t=0}\,.$$ 
Furthermore, every
regular $\GA$-action on $X$ arises from such a locally nilpotent derivation $\partial$. The regular $\GA$-action $\mu$ corresponding to $\partial$ is given as the comorphism of the morphism
$$\mu^*\colon \KK[X]\to\KK[t]\otimes \KK[X] \simeq\KK[X][t]\quad \mbox{defined by} \quad f\mapsto \exp(t\partial)(f)=\sum_{i=0}^{\infty} \dfrac{t^i\partial^{i}(f)}{i!}\,.$$

Let now $S$ be a monoid and let $B$ be an $S$-graded algebra. A derivation $\partial\colon B\to B$ is said to be homogeneous if it sends homogeneous elements into homogeneous elements. For every homogeneous $f\in B_m$ such that $\partial(f)\neq 0$ we define the element $\alpha(f)\in M_S$ such that $\partial(f)\subset B_{m+\alpha(f)}$. In the following lemma we show that $\alpha(f)$ does not depend on the choice of $f$ and so will be denoted simply by $\alpha$. The element $\alpha\in M_S$ is called the degree of $\partial$.

\begin{lemma}\label{lemma:graded}
Let $B$ be an $S$-graded algebra, where $S$ is a cancellative monoid. If $\partial$ is  a homogeneous derivation $\partial\colon B\to B$, then $\alpha(f)\in M_S$ defined above does not depend on  $f\in B\setminus \ker \partial$.
\end{lemma}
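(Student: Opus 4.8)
The plan is to prove the statement in two stages: first that $\alpha$ is constant on each graded component $B_m$, and then that this common value is the same for all degrees. For the first stage, suppose $f,g\in B_m$ satisfy $\partial f\neq 0\neq\partial g$ but $\alpha(f)\neq\alpha(g)$. Since $M_S$ is a group, the degrees $m+\alpha(f)$ and $m+\alpha(g)$ are distinct, so $\partial f$ and $\partial g$ are nonzero elements lying in different homogeneous components; hence they cannot cancel and $\partial(f+g)=\partial f+\partial g$ carries two nonzero homogeneous components. As $f+g\in B_m$ is homogeneous, this contradicts the homogeneity of $\partial$. Thus $\partial(B_m)\subseteq B_{m+\alpha_m}$ for a single element $\alpha_m\in M_S$ whenever $\partial(B_m)\neq 0$, and it remains to show that $\alpha_m$ is independent of $m$.

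For the second stage, given degrees $m,m'$ with $\partial(B_m)\neq 0\neq\partial(B_{m'})$, I would choose homogeneous $f\in B_m$ and $g\in B_{m'}$ outside $\ker\partial$ and apply the Leibniz rule
\[
\partial(fg)=f\,\partial g+g\,\partial f .
\]
The summand $f\,\partial g$ is homogeneous of degree $m+m'+\alpha_{m'}$ and $g\,\partial f$ of degree $m+m'+\alpha_m$, while $\partial(fg)$ is homogeneous because $fg\in B_{m+m'}$. If $\alpha_m\neq\alpha_{m'}$ and both summands are nonzero, then $\partial(fg)$ would have two nonzero components in distinct degrees and fail to be homogeneous, a contradiction; hence $\alpha_m=\alpha_{m'}$.

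The hard part will be the nonvanishing conditions concealed in this last step: the argument only closes if $fg\neq 0$, $f\,\partial g\neq 0$ and $g\,\partial f\neq 0$, that is, if no homogeneous element of $B$ is a zero-divisor. This is exactly where the structure of the semigroup algebra and the cancellativity of $S$ become indispensable. For $B=\KK[S]$ every component is spanned by a single monomial $\chi^m$, and cancellativity guarantees $\chi^m\chi^{m'}=\chi^{m+m'}\neq 0$, so products of nonzero monomials stay nonzero. I would therefore reduce to $f$ and $g$ that are scalar multiples of $\chi^m$ and $\chi^{m'}$, invoke cancellativity to exclude any cancellation in $\partial(fg)$, and then read off $\alpha_m=\alpha_{m'}$ from the degree comparison. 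I expect the true difficulty to be this control of zero-divisors rather than the degree bookkeeping, since the equal-degree reduction is purely formal whereas the cross-degree comparison is precisely the place where cancellativity cannot be dispensed with.
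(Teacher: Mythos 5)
Your degree comparison is exactly the paper's proof: the paper takes two homogeneous elements $f\in B_m$, $g\in B_{m'}$ outside $\ker\partial$, applies Leibniz to $fg$, observes that the left-hand side is homogeneous while the two summands on the right have degrees $m+m'+\alpha(g)$ and $m+m'+\alpha(f)$, and cancels. (Your first stage is redundant---since $\alpha$ is defined element by element, the cross-degree comparison already covers two elements of equal degree---but it is correct, and it is the only part of the argument that survives in an arbitrary $S$-graded algebra.) Where you genuinely depart from the paper is in flagging the non-vanishing of $f\,\partial g$ and $g\,\partial f$ as the essential point. This is not excess caution: the paper's proof passes over it in silence, and for a general $S$-graded algebra $B$ the lemma as stated is in fact \emph{false}, even with $S$ cancellative. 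Take $B=\KK[x,y]/(xy)$ with its natural $\ZZ_{\geq 0}^2$-grading and the derivation induced by $x^2\frac{\partial}{\partial x}+y^2\frac{\partial}{\partial y}$; it preserves the ideal $(xy)$, hence descends to $B$, it sends homogeneous elements to homogeneous elements, and yet $\alpha(x)=(1,0)$ while $\alpha(y)=(0,1)$. The Leibniz argument collapses there for precisely the reason you isolate: $x\cdot\partial(y)=xy^{2}=0$ and $y\cdot\partial(x)=x^{2}y=0$ in $B$. This is the same mechanism as the paper's own example showing cancellativity is needed in Lemma~\ref{lemma:graded}, transplanted from a non-cancellative semigroup algebra to a cancellative grading on an algebra with homogeneous zero-divisors.

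Consequently your ``reduction'' to $B=\KK[S]$ is not a reduction---none exists---but it is the right repair: in a semigroup algebra, cancellativity of $S$ makes every homogeneous element a non-zero-divisor (this is Remark~\ref{remark:zero-divisor} in the paper), so $f\,\partial g$ and $g\,\partial f$ cannot vanish and your comparison closes. Since the lemma is only ever invoked for $\KK[S]$ later on (Theorem~\ref{LND-decomposition}, Theorem~\ref{theorem:equivalence}), this proves everything the paper actually uses; but be aware that you have proved a corrected statement (well-definedness of the degree for homogeneous derivations of $\KK[S]$, or of any $S$-graded algebra whose nonzero homogeneous elements are non-zero-divisors), not the lemma as literally stated---and that is unavoidable, since the literal statement fails. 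Two small streamlinings: you do not need $fg\neq 0$, because if $f\,\partial g$ and $g\,\partial f$ are nonzero with distinct degrees their sum is nonzero and non-homogeneous, contradicting homogeneity of $\partial(fg)$ whether or not $fg$ vanishes; and the case where the two summands are nonzero but cancel each other is harmless, since $f\,\partial g=-g\,\partial f\neq 0$ already forces their degrees to agree.
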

\begin{proof}
Let $f\in B_{m}$ and $g\in B_{m'}$ such that $\partial(f)\neq0$ and $\partial(g)\neq 0$. By Leibniz rule we have $$\partial(fg)=f\partial(g)+g\partial(f)\,.$$
The left-hand side is homogeneous of degree $m+m'+\alpha(fg)$ and so the degree of both summands on the right must agree, i.e., $m+m'+\alpha(g)=m+m'+\alpha(f)$ and by the cancellative property we have
$\alpha(f)=\alpha(g)$.
\end{proof}

\subsection{Affine toric variety}\label{sec:toric-varieties}

A (split) algebraic torus $T$ is an algebraic group isomorphic to $\GM^n$ where $n$ is a non-negative integer. There are two mutually dual free abelian groups of rank $n$ canonically associated to $T$, namely, the character lattice $M=\homo(T,\GM)$ and the 1-parameter subgroup lattice $N=\homo(\GM,T)$ with the duality pairing given by $(m,u)\mapsto k$ where $k$ is the unique integer such that $m\circ u(t)=t^k$. It is customary to regard $M$ and $N$ as abstract groups with additive notation and to denote the duality pairing by $\la m,u\ra=u(m)$. This yields that the algebra of regular functions of $T$ corresponds to $\KK[M]$ and the character associated to $m\in M$ corresponds to $\chi^m$.

An affine toric variety is an irreducible affine algebraic variety $X$ containing a torus $T$ as a Zariski open subset such that the action of $T$ on itself by translation extends to an algebraic action of $T$ on $X$. Remark that, following \cite{cox2011toric}, we do not assume normality unlike other authors, see, e.g., \cite{oda1983convex,fulton1993introduction}. The category of affine toric varieties is dual with the category of affine monoids $S$. To obtain the correspondence between objects, if $S$ is an affine monoid, the affine algebraic variety $X_S=\spec(\KK[S])$ is toric with acting torus $T=\spec(\KK[M])$, where $M=M_S$. As for the other direction, let $X$ be an affine toric variety with acting torus $T$. The dominant inclusion $T\subset X$ provides us with an injective homomorphism $\KK[X]\hookrightarrow \KK[M]$ and the  affine monoid $S_X$ associated to $X$ is the monoid $\{m\in M\mid \chi^m\in\KK[X]\}$. 

In Section \ref{section 4} our main interest will be non-normal affine toric varieties. Nevertheless, normal affine toric varieties will also be used as a tool throughout this paper. Letting $S$ be an affine monoid, we let $M=M_S$ and $N=\homo(M,\Z)$. We also define the corresponding rational vector spaces $M_{\QQ}=M\otimes\QQ\simeq \QQ^n$ and $N_{\QQ}=N\otimes\QQ\simeq \QQ^n$. The duality pairing extends naturally to a pairing $M_\QQ\times N_\QQ\rightarrow \QQ$. The dual cones 
$$\sigma=\{u\in N_\QQ\mid u(m)\geq 0\ \text{for all}\ m\in S\}\quad\mbox{and}\quad \sigma^{\vee}=\{m\in M_\QQ\mid u(m)\geq 0\ \text{for all}\ u\in \sigma\}$$ 
are the convex polyhedral cones associated to $X_S$. 
By this construction, the monoid $\sigma^\vee\cap M$ coincides with the saturation of $S$. Furthermore, $S$ is saturated if and only if $\KK[S]$ is integrally closed if and only if $X_S$ is normal. Moreover, if $X_S$ is a non-normal affine toric variety, $X_{\sigma^\vee\cap M}$ is the normalization of $X_S$ with normalization morphism $\eta\colon X_{\sigma^\vee\cap M}\to  X_S$ obtained from the inclusion $S\hookrightarrow \sigma^\vee\cap M$. It is customary to denote $X_{\sigma^\vee\cap M}$ simply by $X_\sigma$.

In this paper we follow the notational conventions in \cite{cox2011toric}. In particular,  the orthogonal space of $A\subset M_\QQ$ is 
$$A^\bot=\{u\in N_\QQ\mid u(m)= 0 \mbox{ for all } m\in A\} $$
We let $H_{m}=\{m\}^\bot$ be the hyperplane orthogonal to $m$. Let  $\sigma\subset N_\QQ$ be a polyhedral cone. A face of $\sigma$ is the intersection  $\tau=\sigma \cap H_m$ for some $m\in \sigma^{\vee}$. By $\sigma(1)$ we denote the set of one-dimensional faces of $\sigma$ that we call rays. A ray will be identified and denoted by its primitive element i.e., the first non-trivial vector in $\rho\cap N$. The analogous definitions hold when we exchange the roles of $M$ and $N$.

The submonoid $\{0\}$ is a face of the affine monoid $S$ if and only if $X_S$ is not isomorphic to $Y\times \GM$. In the case of a normal affine toric variety $X_\sigma$ this condition is also equivalent to $\sigma\subset N_\QQ$ having dimension $\rank N$. In this case, we say that $\sigma$ is a full-dimensional cone meaning that the dimension of $\sigma$ equals the rank of $N$. On the other hand, the cone $\sigma^\vee$ is always full-dimensional by construction. This translate into $\sigma$ being a strongly convex cone meaning that $\{0\}$ is a face of $\sigma$.

\subsection{Locally nilpotent derivations on normal affine toric varieties}

Let $X$ be the normal affine toric variety associated to the polyhedral cone $\sigma\in N_\QQ$. We let $S=\sigma^\vee\cap M$ so that the ring of regular functions of $X$ is $\KK[S]$. There exists a combinatorial description of homogeneous locally nilpotent derivations on $\KK[S]$ given in \cite[Theorem 2.7]{liendo2010affine} by the second author that we recall here.

\begin{definition}\label{defi:root}
A vector $\alpha\in M$ is a Demazure root of $S$ if there exists $\rho\in\sigma(1)$ such that $\rho(\alpha)=-1$ and $\rho'(\alpha) \geq 0$, for every $\rho'\in \sigma(1)\setminus \{\rho\}$. The ray $\rho$ will be called the distinguished ray of the root $\alpha$. By $\mathcal{R}(\sigma)$ we will refer to the set of all roots of $\sigma$ and by $\mathcal{R}_{\rho}(\sigma)$ the set of all roots of $\sigma$ with distinguished ray $\rho$, i.e.,
$$\mathcal{R}_{\rho}(\sigma)=\left\{\alpha\in M\mid \rho(\alpha)=-1\mbox{ and } \rho'(\alpha)\geq0\ \text{for all}\ \rho'\in\sigma(1)\mbox{ different from } \rho \right\}$$
\end{definition}

With the previous notation to every Demazure root $\alpha$ of $\sigma$ with distinguished ray $\rho$ we associate a homogeneous locally nilpotent derivation $$\partial_\alpha\colon \KK[\sigma^\vee\cap M]\rightarrow \KK[\sigma^\vee\cap M] \quad\mbox{defined via}\quad   \chi^m\mapsto\rho(m)\chi^{m+\alpha}\,.$$ 
Furthermore, for every non-trivial homogeneous locally nilpotent derivation $\partial$ on $\KK[S]$ there exists a root $\alpha$ of $S$ and $r\in \KK^*$ such that $\partial=r\partial_\alpha$ \cite[Theorem 2.7]{liendo2010affine}.

\section{Homogeneous decomposition of locally nilpotent derivations}\label{section 2}

Let $B$ be a finitely generated $S$-graded algebra with unit element where $S$ is a finitely generated cancellative monoid and let $\partial\colon B\to B$ be a derivation. Since $\partial$ is a linear map, it can be decomposed into homogeneous pieces. In the following proposition, we show that only finitely many such homogeneous pieces are non-trivial and that each such homogeneous piece is again a derivation.

\begin{proposition} \label{finte-decomposition}
Let $B$ be a finitely generated $S$-graded algebra where $S$ is a finitely generated cancellative monoid. Then every derivation $\partial\colon B\to B$ can be decomposed as a finite sum $\partial=\sum\partial_\alpha$ of $M_S$-homogeneous derivations of degree $\alpha \in M_S$.
\end{proposition}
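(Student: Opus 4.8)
The plan is to define, for each $\alpha \in M_S$, a candidate homogeneous piece $\partial_\alpha$ by extracting the appropriate graded component of $\partial$, and then to verify three things: that each $\partial_\alpha$ is genuinely a derivation of degree $\alpha$, that $\partial = \sum_\alpha \partial_\alpha$, and that only finitely many $\partial_\alpha$ are nonzero. Concretely, regarding $B$ as $M_S$-graded (via $B_m = \{0\}$ for $m \in M_S \setminus S$), every element admits a unique decomposition into homogeneous components; for a homogeneous $f \in B_m$ I would set $\partial_\alpha(f) := (\partial f)_{m+\alpha}$, the degree-$(m+\alpha)$ component of $\partial f$, and extend $\partial_\alpha$ to all of $B$ by $\KK$-linearity.

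First I would check that each $\partial_\alpha$ is a homogeneous derivation of degree $\alpha$. Homogeneity is immediate from the definition. For the Leibniz rule, since both sides are bilinear it suffices to test on homogeneous $f \in B_m$ and $g \in B_{m'}$: comparing the degree-$(m+m'+\alpha)$ components of $\partial(fg) = f\partial g + g \partial f$ and using $f \in B_m$, $g \in B_{m'}$ yields $\partial_\alpha(fg) = f\partial_\alpha(g) + g\partial_\alpha(f)$. That $\partial = \sum_\alpha \partial_\alpha$ then follows by evaluating on a homogeneous $f \in B_m$: as $\alpha$ runs over the group $M_S$, the degree $m+\alpha$ runs over all of $M_S$, so $\sum_\alpha \partial_\alpha(f) = \sum_\beta (\partial f)_\beta = \partial f$, a finite sum for each fixed $f$.

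The main point, and the only place finite generation enters, is the finiteness of the decomposition. Here I would first replace a finite $\KK$-algebra generating set of $B$ by the collection of its homogeneous components, which again generates $B$ because $B$ is graded; call these $g_1, \dots, g_r$ with $g_i \in B_{m_i}$. Each $\partial g_i$ lies in $B$ and hence has finite homogeneous support $\Sigma_i = \{\beta \in M_S : (\partial g_i)_\beta \neq 0\}$, so $\partial_\alpha(g_i) = (\partial g_i)_{m_i + \alpha}$ is nonzero only for $\alpha$ in the finite set $\Sigma_i - m_i$. Consequently, for $\alpha$ outside the finite union $\bigcup_i (\Sigma_i - m_i)$, the derivation $\partial_\alpha$ vanishes on every generator; since the locus $\{b \in B : \partial_\alpha(b) = 0\}$ is a $\KK$-subalgebra containing $\KK$ and all the $g_i$, it must be all of $B$, whence $\partial_\alpha = 0$. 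This confines the nonzero pieces to finitely many degrees.

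I do not expect a serious obstacle. The only subtlety is the finiteness, which reduces cleanly to the standard fact that a derivation vanishing on a set of algebra generators vanishes identically, combined with the finite homogeneous support of each $\partial g_i$. The cancellative hypothesis is used only insofar as it guarantees that $M_S$ is a group, so that the $S$-grading extends to an $M_S$-grading and translation by $m$ permutes the degrees, which is what makes both the reindexing in $\partial = \sum_\alpha \partial_\alpha$ and the translated support $\Sigma_i - m_i$ legitimate.
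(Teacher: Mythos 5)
Your proof is correct, but it takes a genuinely different route from the paper's. The paper proceeds via a presentation: it chooses homogeneous generators $f_1,\dots,f_n$ of $B$, writes $B\simeq \KK[x_1,\dots,x_n]/I$, lifts $\partial$ to a derivation $\widetilde\partial$ of the polynomial ring (where defining a derivation and splitting it into homogeneous pieces is automatic, since a derivation of a free algebra is freely determined by the images of the variables), and then descends each piece $\widetilde\partial_\alpha$ to $B$ using that $I$ is homogeneous, so $\widetilde\partial_\alpha(I)\subset I$; finiteness is immediate because each $\widetilde\partial(x_i)$ has finitely many homogeneous components. You instead work intrinsically on $B$: you define $\partial_\alpha(f)=(\partial f)_{m+\alpha}$ for $f\in B_m$, verify the Leibniz rule by comparing graded components (this step silently uses that multiplication by $f\in B_m$ shifts degrees bijectively, i.e.\ that translation by $m$ is injective on $M_S$ --- exactly where cancellativity enters, as you note), and obtain finiteness from the fact that a derivation vanishing on a finite homogeneous generating set vanishes identically, its kernel being a $\KK$-subalgebra. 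Since the homogeneous decomposition of a derivation is unique, both constructions produce the same pieces; the difference is purely methodological. Your version avoids the choice of presentation and the somewhat delicate lifting of the grading and of $\partial$ to $\KK[x_1,\dots,x_n]$, and it cleanly isolates the role of each hypothesis (cancellativity for the group structure on degrees, finite generation only for finiteness of the support); the paper's version buys the derivation property of the pieces for free on the polynomial ring, at the price of the descent argument through the homogeneous ideal $I$.
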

\begin{proof}
Let $\{f_1,\dots, f_n\}$ be a set of homogeneous generators of $B$ as algebra. Letting $\KK^{[n]}=\KK[x_1,\dots,x_n]$, we have an isomorphism between $B$ and $\KK^{[n]}/I$ where $I$ is the kernel of the surjective homomorphism $\varphi\colon \KK^{[n]}\rightarrow B$ given by $\varphi(x_i)\mapsto f_i$.  
The $M_S$-grading on $B$ induces an $M_S$-grading on $\KK^{[n]}/I$. We can lift this $M_S$-grading to $\KK^{[n]}$ by setting $\KK^{[n]}=\bigoplus_{\alpha\in M} \KK^{[n]}_\alpha$ where $$\KK^{[n]}_\alpha=\{g\in\KK[x_1,\dots,x_n]\mid \varphi(g) \mbox{ is homogeneous of degree } \alpha \}.$$ 
Moreover, choosing $\widetilde{f}_i\in \varphi^{-1}(\partial(f_i))$, we can lift $\partial$ to a derivation $\widetilde{\partial}$ satisfying $\partial\circ\varphi=\varphi\circ\widetilde{\partial}$ by setting
$$\widetilde{\partial}\colon \KK^{[n]}\rightarrow\KK^{[n]}\quad\mbox{given by}\quad x_i\mapsto \widetilde{f}_i$$
We can now decompose $\widetilde{f_i}$ as a finite sum of homogeneous elements in $\widetilde{f}_i=\sum_{j\in M} \widetilde{f}_{i,j}$.
For every $\alpha \in M_S$ we define the homogeneous derivation 
$$\widetilde{\partial}_\alpha\colon \KK^{[n]}\to \KK^{[n]}\quad \mbox{given by} \quad x_i\mapsto \widetilde{f}_{i,a_i+\alpha}\,,$$
so that $\sum_\alpha\widetilde{\partial}_\alpha=\widetilde{\partial}$ and all but finitely many $\widetilde{\partial}_\alpha=0$. Moreover, since $\partial\circ\varphi=\varphi\circ\widetilde{\partial}$ we have $\widetilde{\partial}(I)\subset I$ and since $I$ is homogeneous we have $\widetilde{\partial}_\alpha(I)\subset I$. Hence, $\widetilde{\partial}_\alpha$ passes to the quotient $\KK^{[n]}/I$ and so defines a homogeneous derivation $\partial_\alpha\colon B\to B$ of degree $\alpha\in M_S$ satisfying $\sum_\alpha\partial_\alpha=\partial$.
\end{proof}

We now provide examples showing that the hypothesis cancellative in Lemma~\ref{lemma:graded} and the hypotheses cancellative and finitely generated in Proposition~\ref{finte-decomposition} are essential. 

\begin{example}[Cancellative in Proposition~\ref{finte-decomposition}] Let $S=\{0,a,b\}$ be the monoid with sum operation given by the table below.
$$
\begin{array}{| c | c | c | c |}
\hline
+ & 0 & a & b \\ \hline
0 & 0 & a & b \\ \hline
a & a & b & b \\ \hline
b & b & b & b \\ \hline
\end{array}
$$
It is not cancellative since for instance $a+a=a+b$. Moreover, taking $x=\chi^a$ we have  $\chi^b=\chi^{a+a}=x^2$. It is easily seen that 
$$\KK[S]= \KK\oplus \KK\cdot \chi ^a\oplus\KK\cdot\chi^b\simeq \KK[x]/(x^2-x^3)\,.$$

A straightforward verification shows that the map
$$\partial\colon \KK[S] \to \KK[S] \quad\mbox{given by}\quad \chi^a\mapsto \chi^a-\chi^b \mbox{ and } \chi^b\mapsto 0$$
is a derivation. Nevertheless, it is impossible to exhibit $\partial$ as sum of homogeneous derivations since the map 
$$ 
\partial_0\colon \KK[S]\to  \KK[S]  \quad\mbox{given by}\quad \chi^a\mapsto \chi^a \mbox{ and } \chi^b\mapsto 0$$
is not a derivation. Indeed, the map $\partial_0$ does not satisfy Leibniz rule since
$$0=\partial_0(\chi^b)=\partial_0(\chi^a\cdot\chi^a)=2\chi^a\partial(\chi^a)=2\chi^a\chi^a=2\chi^b\neq 0\,.$$
\end{example}

\begin{example}[Finitely generated in Proposition~\ref{finte-decomposition}] 
Let $S$ be the monoid of sequences of non-negative integers with a finite number of non-zero entries, i.e., 
$$S=\{(a_1,a_2,\dots)\mid a_i\in \ZZ_{\geq 0}\mbox{ and all but finitely many } a_i=0\}\,.$$

The semigroup algebra $\KK[S]$ is the polynomial ring in infinitely many variables $$\KK[S]\simeq \KK[x_1,x_2,\dots], \quad \mbox{where}\quad x_i=\chi^{e_i} \,,$$ 
and $e_i$ is the sequence with the $i$th entry equal to one and all other entries equal to $0$. We define the derivation $$\partial\colon \KK[S]\to \KK[S]\quad \mbox{given by}\quad \chi^{e_i}\mapsto \chi^{2e_i} \mbox{ for all } i\in \ZZ_{>0}\,.$$
The homogeneous components of $\partial$ are 
$$\partial_{e_j}\colon \KK[S]\to \KK[S]\quad \mbox{given by}\quad \chi^{e_j}\mapsto \chi^{2e_j}, \mbox{ and } \chi^{e_i}\to 0  \mbox{ for all } i\neq j\,.$$
These homogeneous components are indeed derivations, but there are infinitely many non-zero such homogeneous components.
\end{example}

\begin{example}[Cancellative in Lemma \ref{lemma:graded}]
Let $S=\ZZ_{\geq 0}^4/\sim$ where $\sim$ is the equivalence relation given by 
\begin{align*}
(m_1,m_2,m_3,m_4)\sim (m_1',m_2',m_3',m_4') \quad \mbox{if and only if} \quad & m_1=m_1'\geq 1, m_2=m_2'\geq 1, \mbox{ and } \\
&m_3+m_4=m_3'+m_4'
\end{align*}

By \cite[Theorem 1.5.2]{howie1995fundamentals}, the sum in the monoid $\ZZ_{\geq 0}^4$ induces a binary operation on $S$ making it into a monoid if and only if for every $m,m',m''\in \ZZ_{\geq 0}^4$ such that $m\sim m'$ we have $m+m''\sim m'+m''$. A straightforward verification  shows that this is the case in our example, so $S$ is a commutative monoid.

Let now $e_i$ be the image of the $i$th standard basis vector on $S$. We define a derivation on $\partial\colon \KK[S]\to \KK[S]$ via 
$$\partial(\chi^{e_1})=\chi^{e_1+e_3},\quad \partial(\chi^{e_2})=\chi^{e_2+e_4},\quad\mbox{and}\quad  \partial(\chi^{e_3})=\partial(\chi^{e_4})=0\,.$$
Remark that $\partial$ is indeed homogeneous since by the Leibniz rule we have:
\begin{align*}
    \partial(\chi^{(m_1,m_2,m_3,m_4)})&=\chi^{m_3e_3}\cdot\chi^{m_4e_4}\left(m_1\chi^{(m_1-1)e_1}\chi^{e_1+e_3}\chi^{m_2e_2}+m_2\chi^{m_1e_1}\chi^{(m_2-1)e_2}\chi^{e_2+e_2}\right) \\
    &=m_1\chi^{(m_1,m_2,m_3+1,m_4)}+m_2\chi^{(m_1,m_2,m_3,m_4+1)}\\
    &=
    \begin{cases}
    m_2\chi^{(m_1,m_2,m_3,m_4+1)} & \mbox{if } m_1=0 \\
    m_1\chi^{(m_1,m_2,m_3+1,m_4)} & \mbox{if } m_2=0 \\
    (m_1+m_2)\chi^{(m_1,m_2,m_3+1,m_4)} & \mbox{if } m_1,m_2\geq 1 \\
    \end{cases}
\end{align*}    
In the last case of the above equation we used the fact that in $S$ we have 
$$(m_1,m_2,m_3+1,m_4)=(m_1,m_2,m_3,m_4+1)\quad \mbox{whenever}\quad  m_1,m_2\geq 1\,.$$
    
In Lemma~\ref{lemma:graded} we defined the degree of an $S$-graded derivation when $S$ is cancellative. In this example, $S$ is not cancellative since $e_1+e_2+e_3=e_1+e_2+e_4$ but $e_3\neq e_4$. Furthermore, it is impossible to define the degree since  $\partial(\chi^{e_1})=\chi^{e_1+e_3}$ would give that the degree is $e_3$ while $\partial(\chi^{e_2})=\chi^{e_2+e_4}$ would give that the degree is $e_4$ which is a contradiction. 

In terms of algebras we have
$$\KK[S]\simeq \KK[x,y,z,t]/I\quad \mbox{where}\quad I=(xyz-xyt)\,,$$
and the derivation $\partial$ is given by 
$$\partial=xz\frac{\partial}{\partial x}+yt\frac{\partial}{\partial y}\,.$$
The derivation $\partial$ defines indeed a derivation in the quotient $\KK[S]=\KK[x,y,z,t]/I$ since 
$$\partial(xyz-xyt)=z\partial(xy)-t\partial(xy)=(z-t)\partial(xy)=(z-t)(xyz-xyt)\in I\,.$$
\end{example}

\medskip

Let $S$ be a finitely generated cancellative monoid. Let $\partial\colon \KK[S]\to \KK[S]$ be a locally nilpotent derivation on the semigroup algebra $\KK[S]$. By Proposition~\ref{finte-decomposition},  the derivation $\partial$ can be decomposed as a finite sum $\partial=\sum\partial_\alpha$ of $M_S$-homogeneous derivations of degree $\alpha \in M_S$. Moreover, if $S$ is affine and saturated, by \cite[Lemma~1.10]{liendo2010affine} some $\partial_\alpha$ is again locally nilpotent. In the remaining of this section we prove the same result in the more general context of finitely generated cancellative monoids, see Theorem~\ref{LND-decomposition}. The techniques required for the proof are not straightforward generalizations of the affine and saturated case.

\begin{lemma}\label{remark:saturation}
Let $S$ be a finitely generated cancellative monoid. If $S$ is saturated, then $S=S'\oplus T$ where $S'$ is a saturated affine monoid and $T$ is a finite group. \end{lemma}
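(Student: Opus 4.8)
The plan is to apply the structure theorem for finitely generated abelian groups to $M = M_S$ and to exhibit the finite group factor $T$ as its torsion subgroup. Since $S$ is finitely generated and cancellative, its Grothendieck group $M$ is a finitely generated abelian group. Let $T = M_{\mathrm{tor}}$ be its (finite) torsion subgroup and choose a free complement $L \cong \ZZ^r$, so that $M = L \oplus T$ as an internal direct sum; such a complement exists because $M/T$ is free. The key observation, and the only place where the saturation hypothesis is used, is that $T \subseteq S$: any $t \in T$ has finite order $k$, so $k\cdot t = 0 \in S$, and saturation forces $t \in S^{sat} = S$; since $T$ is a group this gives $-t \in S$ as well, so $T$ is a subgroup of $S$.

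Next I would set $S' = S \cap L$ and check that $S \cong S' \oplus T$. Given $s \in S$, write $s = \ell + t$ with $\ell \in L$ and $t \in T$; since $-t \in T \subseteq S$, monoid closure gives $\ell = s + (-t) \in S$, hence $\ell \in S \cap L = S'$. This shows the map $S' \times T \to S$, $(\ell,t)\mapsto \ell+t$, is surjective; it is clearly a monoid homomorphism, and it is injective because the underlying map $L \times T \to M$ is a bijection. Thus the decomposition $M = L \oplus T$ restricts to $S = S' \oplus T$.

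It then remains to verify that $S'$ is a saturated affine monoid. The projection $p\colon M \to L$ restricts to a monoid homomorphism $S \to S'$ (the computation above shows $p(s) = \ell \in S'$ for every $s \in S$) which is a retraction onto $S'$, so the images under $p$ of a finite generating set of $S$ generate $S'$; hence $S'$ is finitely generated, and as a submonoid of $L \cong \ZZ^r$ it is affine. For saturation I would note that $M_{S'} \subseteq L$ since $S' \subseteq L$, and then take $a \in M_{S'}$ with $k\cdot a \in S'$ for some $k > 0$: then $k\cdot a \in S$, so saturation of $S$ gives $a \in S$, while $a \in L$, whence $a \in S \cap L = S'$.

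This lemma is structurally short, and there is no serious obstacle once one sees that saturation pulls the whole torsion subgroup into $S$. The two points genuinely requiring care are that the summand $S'$ remains finitely generated — which I would justify by exhibiting it as a retract of $S$ via the projection $p$ rather than merely as a submonoid — and that the saturation of $S'$ must be tested inside $M_{S'} \subseteq L$, not inside the larger group $M$.
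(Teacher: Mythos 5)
Your proof is correct and follows essentially the same route as the paper: both decompose $M = L \oplus T$ by the structure theorem for finitely generated abelian groups, use saturation to absorb the torsion part into $S$ (you via $k\cdot t = 0 \in S$ and monoid closure with $-t$, the paper via multiplying by $|T|$), and conclude $S = S' \oplus T$ with $S'$ the free part of $S$. Your write-up is somewhat more careful on two points the paper treats tersely --- finite generation of $S'$ via the retraction $p|_S$, and testing saturation of $S'$ inside $M_{S'} \subseteq L$ --- but this is elaboration of the same argument, not a different one.
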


\begin{proof}
Let $M=M_S$ be the smallest group containing $S$. By the structure theorem for finitely generated abelian groups, we have $M=L\oplus T$ where $L$ is free and $T$ is finite. Assume now that $m=l+t\in S$ and let $m'=l+t'\in M$ with $l\in L$ and $t,t'\in T$. We have
$$|T|\cdot m'=|T|\cdot (l+t')=|T|\cdot l=|T|\cdot (l+t)=|T|\cdot m\in S \,.$$
By saturation we conclude that $m'\in S$. This yields $S=S'\oplus T$, where $S'$ is the image of $S$ in $L$ under the first projection. Since $L$ is free and finitely generated, we have $S'$ is an affine monoid. Finally, since $S$ is saturated, $S'$ is also saturated,  proving the lemma.
\end{proof}

Let $S$ be a finitely generated cancellative monoid with associated group $M_S=M$. Recall that we defined $N_S=N$ to be the group dual to $M$ and  $S^*=\{u\in N\mid u(m)\geq 0,\mbox{ for all } m\in S\}$ to be the dual monoid in $N$. We now prove the following lemma.

\begin{lemma}\label{lemma:dual-dual}
Let $S$ be a finitely generated cancellative monoid, then 
$$\widetilde{S}=\{m\in M\mid u(m)\geq 0,\mbox{ for all } u\in S^*\}$$
equals the saturation of $S$.
\end{lemma}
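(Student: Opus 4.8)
The plan is to reduce the statement to the classical biduality theorem for rational polyhedral cones, handling the torsion of $M$ by a separate finiteness argument. First I would split off the torsion: by the Fundamental Theorem of Finitely Generated Abelian Groups write $M = L \oplus T$ with $L$ free of finite rank and $T$ finite, and let $\pi\colon M \to L$ be the first projection. Since $\homo(T,\ZZ)=0$, the natural map $N = \homo(M,\ZZ)\to\homo(L,\ZZ)$ is an isomorphism and the duality pairing factors through $\pi$, that is $u(m)=u(\pi(m))$ for all $m\in M$ and $u\in N$. Consequently $S^*$ coincides with the dual monoid of $S':=\pi(S)\subseteq L$ computed in $\homo(L,\ZZ)$, and directly from the definition $\widetilde{S}=\pi^{-1}\bigl(\widetilde{S'}\bigr)$, where $\widetilde{S'}=\{\ell\in L\mid u(\ell)\geq 0 \text{ for all } u\in S^*\}$.

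Next I would settle the free case. Since $S$ is finitely generated, so is $S'$, and $\sigma:=\cone(S')\subseteq L_\QQ$ is a rational polyhedral cone. As $u$ is linear and $\sigma$ is generated by $S'$ over $\QQ_{\geq 0}$, one has $u(m)\geq 0$ for all $m\in S'$ if and only if $u\geq 0$ on all of $\sigma$; hence $S^*=\sigma^\vee\cap\homo(L,\ZZ)$. Dually, since $\sigma^\vee$ is again rational polyhedral, it is generated over $\QQ_{\geq 0}$ by its integral ray generators, which lie in $S^*$, so the condition $u(\ell)\geq 0$ for all $u\in S^*$ is equivalent to $u(\ell)\geq 0$ for all $u\in\sigma^\vee$; therefore $\widetilde{S'}=(\sigma^\vee)^\vee\cap L$. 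By the biduality of polyhedral cones $(\sigma^\vee)^\vee=\sigma$, whence $\widetilde{S'}=\sigma\cap L$. Finally a clearing-denominators argument identifies $\sigma\cap L$ with the saturation of $S'$: if $\ell\in\sigma\cap L$ then $\ell=\sum_i\lambda_i s_i$ with $\lambda_i\in\QQ_{\geq 0}$ and $s_i\in S'$, so $k\ell\in S'$ for a common denominator $k$, while the reverse inclusion is immediate. Thus $\widetilde{S'}=(S')^{sat}$.

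It remains to transport this equality back to $M$, i.e. to prove $\pi^{-1}\bigl((S')^{sat}\bigr)=S^{sat}$, and this is where the torsion is the only real obstacle. The inclusion $S^{sat}\subseteq\widetilde{S}$ is clear since $\pi$ commutes with multiplication by positive integers. For the converse, take $m$ with $\pi(m)\in(S')^{sat}$, so $k\pi(m)=\pi(s)$ for some $k>0$ and $s\in S$; then $s-km$ lies in the torsion group $T$, and multiplying by the order $|T|$ kills it, giving $(|T|k)\,m=|T|\,s\in S$, so $m\in S^{sat}$. The delicate point is precisely this last step: over a lattice the double dual recovers the cone exactly, but in the presence of torsion one must multiply by the exponent of $T$ to land back inside $S$, and it is this extra factor that makes $\widetilde{S}$ equal to the full saturation rather than merely the preimage of the saturation of its free shadow.
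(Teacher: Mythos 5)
Your proof is correct, and its skeleton is the same as the paper's: split $M=L\oplus T$ by the Fundamental Theorem of Finitely Generated Abelian Groups, reduce to the torsion-free image $S'=\pi(S)$, and return to $M$ by multiplying by $|T|$ to kill torsion. The differences lie in how the two sub-steps are discharged, and in both places your version is more self-contained. For the free case, the paper simply cites \cite[Proposition~1.2.4]{cox2011toric} to conclude that the free part $l$ of an element of $\widetilde{S}$ lies in the saturation of $S'$; you instead reprove this via Minkowski--Weyl and biduality of rational polyhedral cones, which is the same underlying fact (one cosmetic repair: when $\sigma$ is not full-dimensional, $\sigma^\vee$ contains a line and is not generated by its rays, so you should say ``finitely many integral generators'' rather than ``integral ray generators''; the argument is unchanged). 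For the return to $M$, the paper invokes Lemma~\ref{remark:saturation}, the splitting $S^{sat}=S''\oplus T$ of a saturated finitely generated cancellative monoid, whereas you inline precisely the computation that proves that lemma: from $k\pi(m)=\pi(s)$ you get $s-km\in T$, hence $(|T|k)\,m=|T|\,s\in S$. Your write-up also makes explicit a point the paper leaves implicit, namely that the duality pairing factors through $\pi$, so that $S^*=(S')^*$ and $\widetilde{S}=\pi^{-1}(\widetilde{S'})$; this is the real justification behind the paper's step from $|T|\cdot m=|T|\cdot l$ to $l\in (S')^{sat}$. In short, the paper's route buys brevity by reusing a prior lemma and an external reference, while yours buys independence from both, at the cost of a slightly longer argument; your concluding remark that $\widetilde{S}$ equals ``the full saturation rather than merely the preimage of the saturation of its free shadow'' is a false dichotomy, since your own proof shows these two sets coincide, but this is commentary and does not affect the correctness of the proof.
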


\begin{proof}
Let $S$ be a finitely generated cancellative monoid. Then the inclusion $S^{sat}\subseteq \widetilde{S}$ is trivial. To prove the other inclusion, by the structure theorem for finitely generated abelian groups we have that $M=L\oplus T$, where $L$ is free and $T$ is finite. We also let $S'$ be the image of $S$ in $L$ under the first projection. Let now $m\in \widetilde{S}$. Then $m=l+t$ with $l\in L$ and $t\in T$. We have $|T|\cdot m=|T|\cdot l\in L$. Since $S'$ is an affine monoid we have that $l$ belongs to the saturation of $S'$ by \cite[Proposition~1.2.4]{cox2011toric}. Hence, $l+t'\in S^{sat}$ for some $t'$. Now by Lemma~\ref{remark:saturation} we have that $m=l+t$ is also in $S^{sat}$.  
\end{proof}

\begin{corollary}\label{remark:outsaturation}
Let $S$ be a cancellative finitely generated monoid. If $\alpha \in M\setminus S^{sat}$ then for every $m\in S$ there exists a  positive integer $\ell$ such that $m+\ell\alpha \notin S$. 
\end{corollary}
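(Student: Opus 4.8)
The plan is to use Lemma~\ref{lemma:dual-dual}, which characterizes the saturation $S^{sat}$ as the set of $m\in M$ satisfying $u(m)\geq 0$ for every $u\in S^*$. The hypothesis $\alpha\notin S^{sat}$ should then translate, by contraposition of that characterization, into the existence of a single dual element that strictly separates $\alpha$ from $S$.

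Concretely, I would first invoke Lemma~\ref{lemma:dual-dual} to write $S^{sat}=\widetilde{S}=\{m\in M\mid u(m)\geq 0 \text{ for all } u\in S^*\}$. Since $\alpha\notin S^{sat}=\widetilde{S}$, there must exist some $u\in S^*\subseteq N$ with $u(\alpha)<0$. As $u$ takes integer values on $M$, this gives the sharper inequality $u(\alpha)\leq -1$.

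Next, I would fix an arbitrary $m\in S$ and examine the integers $u(m+\ell\alpha)=u(m)+\ell\,u(\alpha)$ for positive $\ell$. Because $u(\alpha)\leq -1$, this quantity is strictly decreasing and tends to $-\infty$; in particular any $\ell$ with $\ell>u(m)$ forces $u(m+\ell\alpha)<0$. On the other hand, every element $s\in S$ satisfies $u(s)\geq 0$ by the very definition of $S^*$. Hence $m+\ell\alpha$ cannot belong to $S$, which is exactly the desired conclusion.

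The entire content of the argument is concentrated in Lemma~\ref{lemma:dual-dual}: once the saturation is realized as an intersection of half-spaces cut out by $S^*$, the existence of a separating functional $u$ is automatic and the remaining estimate is a one-line linear computation. I therefore do not expect a genuine obstacle beyond correctly quoting that lemma; the only point requiring a moment's care is that $u$ is $\ZZ$-valued, so $u(\alpha)<0$ really means $u(\alpha)\leq -1$ and the affine function $\ell\mapsto u(m)+\ell\,u(\alpha)$ is strictly decreasing and unbounded below.
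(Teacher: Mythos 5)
Your proposal is correct and follows essentially the same route as the paper: both invoke Lemma~\ref{lemma:dual-dual} to realize $S^{sat}$ as $\widetilde{S}$, extract a functional $u\in S^*$ with $u(\alpha)<0$ from $\alpha\notin\widetilde{S}$, and conclude by the linear estimate $u(m+\ell\alpha)=u(m)+\ell u(\alpha)<0$ for $\ell$ large. Your added remark that $u$ is $\ZZ$-valued (so $u(\alpha)\leq -1$ and $\ell>u(m)$ suffices) is a harmless sharpening of the paper's ``taking $\ell$ big enough.''
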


\begin{proof}
Let $\alpha \in M\setminus S^{sat}$. By Lemma~\ref{lemma:dual-dual}, we have $\alpha \notin \widetilde{S}=S^{sat}$. Hence, there exists  $u\in S^*$ such that $u(\alpha)<0$. Let now $m\in S$. Then $u(m+\ell \alpha)=u(m)+\ell u(\alpha)$. Since $u(\alpha)<0$, taking $\ell$ big enough we can assume $u(m+\ell \alpha)<0$ so that $m+\ell \alpha\notin S^{sat}\supset S$.  
\end{proof}

The following lemma is well known. In lack of a good reference, we provide a proof.

\begin{lemma}\label{lemma:nilradical}
If $T$ is a finite commutative group and $f\in \KK[T]$ is nilpotent, then $f=0$.
\end{lemma}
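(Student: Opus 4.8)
The plan is to show that the group algebra $\KK[T]$ is reduced by realizing its multiplication operators as a simultaneously diagonalizable commuting family. Writing $N=|T|$, the key observation is that every $t\in T$ satisfies $Nt=0$ in the (additively written) group, so the basis element $\chi^t$ obeys $(\chi^t)^N=\chi^{Nt}=\chi^0=1$. Since $\KK$ has characteristic $0$, the polynomial $X^N-1$ is separable — its derivative $NX^{N-1}$ shares no root with it — and it splits completely over the algebraically closed field $\KK$.

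First I would pass to the regular representation: let $M_f\colon \KK[T]\to\KK[T]$ denote multiplication by $f$, a $\KK$-linear endomorphism of the finite-dimensional space $\KK[T]$. The assignment $f\mapsto M_f$ is an injective algebra homomorphism, since $M_f(\chi^0)=f$. Each operator $M_{\chi^t}$ satisfies $(M_{\chi^t})^N=M_{(\chi^t)^N}=\mathrm{id}$, so its minimal polynomial divides the separable, totally split polynomial $X^N-1$; hence $M_{\chi^t}$ is diagonalizable. As $T$ is commutative, the operators $\{M_{\chi^t}\mid t\in T\}$ pairwise commute, so this is a commuting family of diagonalizable operators over $\KK$ and is therefore simultaneously diagonalizable in a common eigenbasis.

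Next I would conclude for an arbitrary element. Writing $f=\sum_{t\in T}c_t\chi^t$, the operator $M_f=\sum_{t}c_t M_{\chi^t}$ lies in the span of the simultaneously diagonalizable family, hence is diagonal in the common eigenbasis and in particular diagonalizable. If moreover $f$ is nilpotent, say $f^k=0$, then $M_f^{k}=M_{f^k}=0$, so $M_f$ is a diagonalizable nilpotent operator and must vanish. Therefore $f=M_f(\chi^0)=0$, as claimed.

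I do not expect a serious obstacle; the only point requiring the characteristic hypothesis is the separability of $X^N-1$, which is exactly what forces diagonalizability and would fail if $\operatorname{char}\KK$ divided $N$. An alternative, equally short route is to invoke Maschke's theorem: in characteristic $0$ the group algebra $\KK[T]$ is semisimple, and a commutative semisimple $\KK$-algebra is a finite product of fields, hence reduced. I would keep the elementary diagonalization argument as the main line, since it is self-contained and avoids appealing to representation-theoretic machinery.
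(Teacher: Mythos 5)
Your proof is correct, but it takes a genuinely different route from the paper's. The paper invokes representation-theoretic machinery: it cites the semisimplicity of $\KK[T]$ in characteristic zero (Maschke's theorem) together with Wedderburn's structure theorem to write $\KK[T]$ as a direct sum of matrix algebras over the algebraically closed field $\KK$; commutativity then forces every block to be $1\times 1$, so $\KK[T]\simeq\KK\oplus\cdots\oplus\KK$ is reduced. That is exactly the ``alternative route'' you flag at the end of your proposal. Your main argument instead stays within elementary linear algebra: the regular representation $f\mapsto M_f$ is faithful; each $M_{\chi^t}$ satisfies the separable, totally split polynomial $X^{|T|}-1$ and is therefore diagonalizable; the finitely many commuting operators $M_{\chi^t}$ admit a common eigenbasis; and a nilpotent operator that is diagonal in some basis vanishes. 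All these steps are sound. What your approach buys is self-containedness and transparency about where the hypotheses enter (characteristic zero gives separability of $X^{|T|}-1$, algebraic closedness gives splitting); what the paper's approach buys is brevity, at the cost of appealing to two cited theorems. Note also that your common eigenbasis in effect constructs the explicit isomorphism $\KK[T]\simeq\KK^{|T|}$ that the paper only illustrates after its proof, in the example $T=\ZZ/3\ZZ$ with cube roots of unity.
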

\begin{proof}
By \cite[Corollary 12.8]{alperin1995group}, $\KK[T]$ is semisimple. Moreover, since $\KK$ is algebraically closed, by Wedderburn's Theorem \cite[Theorem 13.18]{alperin1995group} (see also \cite[Section~18.2]{dummit2004abstract}) the semi\-simple $\KK$-algebra $\KK[T]$ is isomorphic to a direct sum of matrix algebras over $\KK$. Hence, $\KK[T]\simeq \mathcal{M}_{n_1}(\KK)\oplus\cdots\oplus \mathcal{M}_{n_r}(\KK)$ with $r,n_1,\dots,n_r\in \mathbb{Z}_{>0}$ where addition and multiplication are taken component-wise. By the commutativity of $\KK[T]$ we have $n_1=\dots=n_r=1$ i.e., $\KK[T]\simeq \KK\oplus\dots\oplus \KK$. We conclude that if $f\in\KK[T]$ is such that $f^n=0$ for some $n\in \mathbb{Z}_{>0}$ then $f=0$.
\end{proof}

\begin{example}
  The above proof may seem counter-intuitive to those unfamiliar with Wedderburn's Theorem so we provide as an example an isomorphism between $\KK[\ZZ/3\ZZ]$ and $\KK^3=\KK\times\KK\times\KK$ with component-wise addition and multiplication. Let $\KK[\ZZ/3\ZZ]=\KK\oplus\KK\chi^{\overline{1}}\oplus\KK\chi^{\overline{2}}$. Since $\KK$ is algebraically closed of characteristic zero, all three cubic roots of unity $\omega_1$, $\omega_2$ and $\omega_3$ are in $\KK$. Taking $f_i=\tfrac{1}{3}(1\oplus w_i\chi^{\overline{1}}\oplus w_i^2\chi^{\overline{2}})$ for $i=1,2,3$ gives the desired isomorphism since the set $\{f_1,f_2,f_3\}$ is a basis of $\KK[\ZZ/3\ZZ]$ as a vector space and
  $$f_i\cdot f_j=\begin{cases}
  1 & \mbox{if } i=j \\
  0 & \mbox{if } i\neq j 
  \end{cases}$$
\end{example}

For the proof of our main theorem in this section, we need the following remark showing that in $\KK[S]$ a homogeneous element is never a zero divisor.

\begin{remark}\label{remark:zero-divisor}
By the cancellative property on $S$, the homogeneous elements $\chi^m$ is not a zero divisor, for all $m\in S$. Indeed, assume $\chi^m\cdot f=0$ and take the homogeneous decomposition of $f=\sum_i\lambda_i\chi^{m_i}$. Then $\chi^m\cdot f=\chi^m\cdot \sum_i\lambda_i\chi^{m_i}=\sum_i\lambda_i\chi^{m_i+m}=0$. By the cancellative property, we have that all the $m_i+m$ are different and so $\lambda_i=0$ for all $i$.
\end{remark}

Let $S$ be a finitely generated cancellative monoid with $M_S=L\oplus T$, where $L$ is free and $T$ is finite. The following lemma shows that each $M_S$-graded piece of an $L$-homogeneous locally nilpotent derivation in $\KK[S]$ is also locally nilpotent.

\begin{lemma}\label{Lemma: lnd}
Let $S$ be a finitely generated cancellative monoid with $M_S=L\oplus T$, where $L$ is free and $T$ is finite. Let also $\partial$ be a locally nilpotent derivation on $\KK[S]$ with homogeneous decomposition $\partial=\sum_{i=1}^r{\partial_i}$, where each $\partial_i$ is a homogeneous derivation of degree $\alpha_i\in M_S$. If $\alpha_i=\alpha+\alpha_i'$ with $\alpha\in L$ fixed and $\alpha_i\in T$ for all $i\in \{1,\ldots,r\}$, then $\partial_i$ is a homogeneous locally nilpotent derivation on $\KK[S]$ for all $i\in \{1,\ldots,r\}$.  
\end{lemma}

\begin{proof}
Before proceeding with the proof, remark that a straightforward computation shows that we can extend every $\partial_i$ to a derivation $\widetilde{\partial}_i\colon \KK[M_S]\to\KK[M_S]$ by the Leibniz rule via $\widetilde{\partial}_i(\chi^m)=\partial_i(\chi^m)$ and $\widetilde{\partial_i}(\chi^{-m})=-\partial_i(\chi^m)\chi^{-2m}$ for all $m\in S$. Similarly, $\partial$ is extended to $\widetilde{\partial}\colon \KK[M_S]\to\KK[M_S]$ verifying that $\widetilde{\partial}=\sum_{i=1}^r{\widetilde{\partial}_i}$, see also \cite[Theorem 2.3]{Alekseev20}. 

Now, letting $m\in T$, we let $r$ be the smallest integer such that $rm=0$. Then $\widetilde{\partial}(\chi^{rm})=r\cdot\chi^{(r-1)m}\cdot\widetilde{\partial}(\chi^m)=0$. Now, by Remark~\ref{remark:zero-divisor} we conclude that $\widetilde{\partial}(\chi^{m})=0$, for all $m\in T$. We conclude that $\chi^m\in \ker\widetilde{\partial}$ whenever $m$ is a torsion element.

If $\alpha_i\notin S^{sat}$ for all $i\in\{1,\ldots,r\}$, then by Corollary~\ref{remark:outsaturation} for every $i\in\{1,\ldots,r\}$ and every $m\in S$ there exist $\ell$ such that $m+\ell\alpha_i\notin S$ thus $\partial_i^{\ell}(\chi^m)=0$ and so $\partial_i$ is locally nilpotent for all $i\in\{1,\ldots,r\}$.

Assume now that $\alpha_i\in S^{sat}$ for some $i$. We will show that in this case $\partial$ is the trivial derivation. By Lemma~\ref{remark:saturation}, we have that $\alpha$ is also in $S^{sat}$. On the other hand, since each $\partial_i$ is homogeneous of degree $\alpha_i=\alpha+\alpha'_i$, we have that $\widetilde{\partial}_i(\chi^{m})=\lambda_i(m)\chi^{m+\alpha+\alpha'_i}$ with $\lambda_i\colon M_S\to \KK$. The Leibniz rule implies that $\lambda$ is a group homomorphism.  A straightforward verification by induction show
$$\widetilde{\partial}^{n}(\chi^{k\alpha})=\left(\prod_{i=0}^{n-1}(k+i)\right)\chi^{k\alpha+n\alpha}(\lambda_1(\alpha)\chi^{\alpha_1'}+\dots+\lambda_r(\alpha)\chi^{\alpha_r'})^{n},\quad \mbox{for all } n,k\geq 0\,.$$ 
Now choosing $k$ such that $k\alpha\in S$ and $n$ such that $\partial^n(\chi^{k\alpha})=\widetilde{\partial}^n(\chi^{k\alpha})=0$, we have
$$\left(\prod_{i=0}^{n-1}(k+i)\right)\chi^{k\alpha+n\alpha}(\lambda_1(\alpha)\chi^{\alpha_1'}+\dots+\lambda_r(\alpha)\chi^{\alpha_r'})^{n}=0\,.$$ 
By Remark~\ref{remark:zero-divisor} we have $(\lambda_1(\alpha)\chi^{\alpha_1'}+\dots+\lambda_r(\alpha)\chi^{\alpha_r'})^{n}=0$ and by Lemma~\ref{lemma:nilradical}, we have $\lambda_1(\alpha)\chi^{\alpha_1'}+\dots+\lambda_r(\alpha)\chi^{\alpha_r'}=0$. This yields $\widetilde{\partial}(\chi^\alpha)=0$ and moreover, $\lambda_i(\alpha)=0$ for all $i\in\{1,\ldots,r\}$. 

Now, taking into account that $\chi^\alpha$ and $\chi^{\alpha'_i}$ are in the kernel of $\widetilde{\partial}$, again by induction we obtain
$$\partial^{n}(\chi^m)=\widetilde{\partial}^{n}(\chi^m)=\chi^{m+n\alpha}(\lambda_1(m)\chi^{\alpha_1'}+\dots+\lambda_r(m)\chi^{\alpha_r'})^{n},\quad \mbox{for all } m\in S \mbox{ and all }n\geq 0\,.$$ 
For every $m\in S$ we can choose $n$ big enough so that $\partial^{n}(\chi^m)=0$. With the same argument as before, we conclude $\lambda_1(m)\chi^{\alpha_1'}+\dots+\lambda_r(m)\chi^{\alpha_r'}=0$ and so  $\partial(\chi^{m})=0$ ending the proof.
\end{proof}

\begin{remark}
    It is a byproduct of the above proof that the degree $\alpha\in M_S$ of a non-trivial homogeneous locally nilpotent derivation  $\partial\colon\KK[S]\to\KK[S]$ satisfies $\alpha\notin S^{sat}$. This generalizes \cite[Corollary~2.9]{liendo2010affine}.
\end{remark}

We can now state and prove our main theorem in this section.

\begin{theorem} \label{LND-decomposition}
Let $S$ be a finitely generated cancellative monoid and let $\partial\colon \KK[S]\to \KK[S]$ be a derivation. Then $\partial$ admits a decomposition into homogeneous pieces $\partial=\partial_1+\partial_2+\ldots+\partial_k$. Furthermore, if $\partial$ is locally nilpotent, then $\partial_i$ is locally nilpotent as well for some $i\in\{1,\ldots,k\}$.
\end{theorem}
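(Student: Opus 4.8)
The first assertion is exactly Proposition~\ref{finte-decomposition} applied to $B=\KK[S]$: since $S$ is finitely generated, the semigroup algebra $\KK[S]$ is a finitely generated $S$-graded algebra, so $\partial=\partial_1+\cdots+\partial_k$ with each $\partial_i$ homogeneous of some degree $\alpha_i\in M=M_S$.

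For the second assertion, write $M=L\oplus T$ with $L$ free of finite rank and $T$ finite, as in the proof of Lemma~\ref{remark:saturation}, and let $\pi\colon M\to L$ be the projection onto the free part. Pushing the $M$-grading of $\KK[S]$ forward along $\pi$ turns $\KK[S]$ into an $L$-graded algebra (the $L$-degree of $\chi^m$ being $\pi(m)$), with respect to which $\partial_i$ is homogeneous of degree $\pi(\alpha_i)$. Group the components by free degree: for $\beta\in L$ set $\partial^{(\beta)}=\sum_{\pi(\alpha_i)=\beta}\partial_i$, so that $\partial=\sum_\beta\partial^{(\beta)}$ is the decomposition of $\partial$ into $L$-homogeneous pieces.

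The plan is now to isolate a single free degree by an extreme-degree argument and then appeal to Lemma~\ref{Lemma: lnd} to descend to an individual $\partial_i$. Only finitely many free parts $\beta$ occur, so there is a linear form $\nu\colon L\to\ZZ$ that is injective on this finite set; let $\beta_0$ be the free degree with the largest value of $\nu$. Composing with $\pi$ gives a $\ZZ$-grading $w=\nu\circ\pi\colon M\to\ZZ$ of $\KK[S]$ for which $\partial^{(\beta_0)}$ is precisely the homogeneous component of $\partial$ of top $w$-degree. I then run the standard leading-term argument: for $m\in S$, expanding $\partial^n(\chi^m)=\bigl(\sum_\beta\partial^{(\beta)}\bigr)^n(\chi^m)$ and collecting by $w$-degree, the unique summand of maximal $w$-degree $w(m)+n\,\nu(\beta_0)$ is $\bigl(\partial^{(\beta_0)}\bigr)^n(\chi^m)$, because by injectivity of $\nu$ on the occurring free parts every other word of length $n$ lands in strictly smaller $w$-degree. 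Since $\partial$ is locally nilpotent, $\partial^n(\chi^m)=0$ for $n\gg 0$, and being the top $w$-graded piece it cannot be cancelled, so $\bigl(\partial^{(\beta_0)}\bigr)^n(\chi^m)=0$. As the elements annihilated by a power of $\partial^{(\beta_0)}$ form a subalgebra containing every $\chi^m$, $m\in S$, which generate $\KK[S]$, the derivation $\partial^{(\beta_0)}$ is locally nilpotent.

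Finally, every component $\partial_i$ appearing in $\partial^{(\beta_0)}$ has degree $\alpha_i=\beta_0+\alpha_i'$ with $\alpha_i'\in T$, so $\partial^{(\beta_0)}$ satisfies the hypotheses of Lemma~\ref{Lemma: lnd}. That lemma, whose proof uses the semisimplicity of $\KK[T]$ recorded in Lemma~\ref{lemma:nilradical}, then shows that each such $\partial_i$ is locally nilpotent; in particular some $\partial_i$ is, as claimed. The main obstacle is precisely the point where the torsion of $M$ defeats the naive extreme-degree argument: over a torsion-free grading the top component would already be a single $\partial_i$, but here it is the whole cluster $\partial^{(\beta_0)}$ of components sharing the free degree $\beta_0$, and separating these requires the independent torsion analysis packaged in Lemma~\ref{Lemma: lnd}.
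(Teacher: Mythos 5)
Your proof is correct and follows essentially the same route as the paper: after invoking Proposition~\ref{finte-decomposition}, you isolate the cluster of homogeneous components whose degrees share an extreme free part, show it is locally nilpotent by a leading-term argument, and then apply Lemma~\ref{Lemma: lnd} to handle the torsion. Your generic linear form $\nu$ with a unique maximizer is just a concrete realization of the paper's choice of a vertex of the polytope $\Delta$ spanned by the free parts (and it even subsumes the finite-$S$ case the paper treats separately), so the two arguments coincide in substance.
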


\begin{proof}
Let $S$ be a finitely generated cancellative monoid and consider a derivation $\partial\colon \KK[S]\to \KK[S]$. The existence of a decomposition into homogeneous pieces was proven in Proposition~\ref{finte-decomposition}. 

Assume first that $S$ is finite. For every $m\in S$ there exists a minimal $\ell:=\ell(m)\in \ZZ_{\geq 0}$ such that $\ell\cdot m=0$. Hence, applying the derivation to $\chi^{\ell\cdot m}=(\chi^m)^\ell$ we obtain  $\ell\chi^{(\ell-1)m}\partial(\chi^m)=0$. Since the homogeneous element $\chi^{(\ell-1)m}$ is not a zero divisor (see Remark~\ref{remark:zero-divisor})  we obtain $\partial(\chi^m)=0$ and so $\partial$ is the trivial derivation.

Let now $S$ be any finitely generated cancellative monoid. Let $M=M_S$ be the smallest group containing $S$. By the structure theorem for finitely generated abelian groups we have $M=L\oplus T$, where $L$ is free and $T$ is composed by the torsion elements of $M$.  

Assume now that $\partial$ is locally nilpotent and let $\alpha_i$ be the degree of $\partial_i$, for $i\in \{1,\ldots, k\}$. We decompose $\alpha_i=\alpha_i'+\alpha_i''$ with $\alpha_i'\in L$ and $\alpha_i''\in T$. In the vector space  $L\otimes_{\ZZ}\RR$ we take the polytope $\Delta$ obtained as the  convex hull of the set $\{\alpha_i'\mid i=1,\ldots k\}$.

Let $\alpha'$ be a vertex of $\Delta$ and let $\partial'$ be the sum of all the pieces in the  homogeneous decomposition of $\partial$  having degree with  free part equal to $\alpha'$, i.e.,  
$$\partial'=\sum_{\alpha\in I}\partial_i\quad\mbox{with}\quad I=\{i\in \{1,\ldots,k\}\mid \alpha_i'=\alpha'\}\,.$$

Remark now that $\KK[S]$ is also an $L$-graded algebra and that $\partial'$ is an $L$-homogeneous piece of $\partial$. By \cite[Lemma~1.10]{liendo2010affine}, we have that $\partial'$ is locally nilpotent. Finally, Lemma~\ref{Lemma: lnd} yields that each $\partial_i$ with $i\in I$ is locally nilpotent, proving the theorem.
\end{proof}

\section{
Demazure roots of a cancellative  monoid and locally nilpotent derivations}\label{section 3}

In this section we provide a natural generalization of the notion of Demazure roots to the more general setting of cancellative monoids and show how this notion classifies homogeneous locally nilpotent derivations on the corresponding semigroup algebra.

\subsection{Demazure roots of a cancellative monoid}

The following is our definition of Demazure roots of a non-cancellative monoid.

\begin{definition}\label{definition-root} %
Let $S$ be a cancellative monoid and let $M_S=M$ be its associated group. We also let $N=\homo(M,\ZZ)$ and $S^*$ be the dual monoid. An element $\alpha\in M$ is called a Demazure root of $S$ if 
\begin{enumerate}[$(i)$]
    \item There exists $\rho\in S^*(1)$ such that $\rho(\alpha)=-1$, and
    \item The element $m+\alpha$ belongs to $S$ for all $m\in S$  such that  $\rho(m)>0$.
\end{enumerate}
   We say that $\rho$ is the distinguished ray of $\alpha$. We denote the set of roots of $S$ by $\mathcal{R}(S)$ and the set of roots of $S$ with distinguished ray $\rho$ by $\mathcal{R}_{\rho}(S)$.
\end{definition}

As proven in the following lemma, it is enough to check condition $(ii)$ in the above definition for a generating set of $S$.

\begin{lemma}\label{lemma:base} 
Let $S$ be a cancellative monoid. Let $\alpha\in M$ and  $\rho\in S^*(1)$ be such that  $\rho(\alpha)=-1$. If $A$ is a generating set of $S$, then $m'+\alpha$ belongs to $S$ for all $m'\in S$  such that  $\rho(m')>0$ if and only if $m+\alpha$ belongs to $S$ for all $m\in A$  such that  $\rho(m)>0$.
\end{lemma}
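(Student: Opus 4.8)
The plan is to prove the two implications separately, with the forward direction being immediate and essentially all the content lying in the converse. Note first that the value $\rho(\alpha)=-1$ plays no role in the argument; only the membership $\rho\in S^*(1)$ will be used, through the fact that $\rho$ is nonnegative on $S$.

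For the direction ``$\Rightarrow$'' I would simply observe that $A\subseteq S$, so the assumption that $m'+\alpha\in S$ for every $m'\in S$ with $\rho(m')>0$ specializes directly to the elements of $A$ with $\rho(m)>0$; nothing further is needed.

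For the converse ``$\Leftarrow$'', the observation I would exploit is that $\rho\in S^*$ means by definition $\rho(m)\geq 0$ for every $m\in S$, and in particular for every generator in $A$. Given an arbitrary $m'\in S$ with $\rho(m')>0$, I would write it as a finite sum $m'=m_1+\cdots+m_k$ with each $m_i\in A$, which is possible since $A$ generates $S$; here $k\geq 1$ because $\rho(m')>0$ excludes the empty sum $m'=0$. Applying $\rho$ and using additivity gives $\rho(m')=\sum_{i=1}^k\rho(m_i)$ with every summand $\geq 0$, so since the total is strictly positive at least one index $i_0$ satisfies $\rho(m_{i_0})>0$. The hypothesis then yields $m_{i_0}+\alpha\in S$, and rewriting in the ambient group $M$ gives
$$m'+\alpha=(m_{i_0}+\alpha)+\sum_{i\neq i_0}m_i\,.$$
Every term on the right lies in $S$ (the first by hypothesis, the remaining $m_i\in A\subseteq S$), so their sum lies in $S$ by closure under addition, giving $m'+\alpha\in S$.

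I do not anticipate a serious obstacle. The only point requiring genuine attention is the opening observation of the converse: membership $\rho\in S^*$ forces $\rho$ to be nonnegative on the entire generating set, which is precisely what allows positivity of $\rho(m')$ to propagate to at least one summand so that a single generator can be ``peeled off'' and absorbed with $\alpha$. Everything else is bookkeeping in the group $M$.
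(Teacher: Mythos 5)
Your proposal is correct and follows essentially the same argument as the paper: the forward direction is immediate, and for the converse both proofs decompose $m'$ into generators, use the nonnegativity of $\rho$ on $S$ (from $\rho\in S^*$) to find one generator with strictly positive $\rho$-value, and peel it off to absorb $\alpha$ while the remaining summands stay in $S$. If anything, you make explicit the positivity argument that the paper leaves implicit in its phrase ``up to reordering, we can assume that $\rho(m_1)>0$.''
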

\begin{proof}
The only if part is trivial. As for the other direction, let $m'\in S$ be such that $\rho(m')>0$. Since $A$ is a generating set, let  $m'=n_1m_1+\dots+n_lm_l$ with $n_i$  positive integer and $m_i\in A$ for all $i\in\{1,\dots,l\}$.  Up to reordering, we can assume that $\rho(m_1)>0$. Hence, $m_1+\alpha\in S$ and so $m'+\alpha=(m_1+\alpha)+(n_1-1)m_1+n_2m_2+\dots+n_lm_l \in S$.
\end{proof}

\begin{example}
A numerical monoid is an affine monoid $S\subset\mathbb{Z}_{\geq0}$ such that $\mathbb{Z}_{\geq0}\setminus S$ is a finite set. Let $S$ be a numerical monoid different from $\mathbb{Z}_{\geq0}$. We claim that $\mathcal{R}(S)=\emptyset$. Indeed,  $M=\ZZ$ and so $N=\ZZ\rho$, where $\rho$ is the linear map $\rho\colon M\to \ZZ$ given by $\rho(1)=1$. Hence, $S^*(1)=\{\rho\}$ and the only element in $M$ satisfying $\rho(\alpha)=-1$ is $\alpha=-1$. Let now $m$ be the smallest element in $S\setminus\{0\}$. We have $m>1$ since otherwise $S=\ZZ_{\geq 0}$. We have $\rho(m)=m>0$ and $m+\alpha=m-1\notin S$. Hence, $\alpha$ is not a Demazure root proving the claim.
\end{example}

\begin{example}\label{example:Z-Z-2}
Let $S=\mathbb{Z}_{\geq 0}\times \mathbb{Z}/2\mathbb{Z}$. The group associated to $S$ is $M= \mathbb{Z}\times\mathbb{Z}/2\mathbb{Z}$. The dual group $\operatorname{Hom}(M,\mathbb{Z})$ is isomorphic to $\mathbb{Z}$ and under this identification we have  $S^*\simeq \mathbb{Z}_{\geq 0}$. A straightforward computation shows that   $\mathcal{R}(S)=\{(-1,\overline{0}), (-1,\overline{1})\in \mathbb{Z}\times \mathbb{Z}/2\mathbb{Z}\simeq M\}$.

$$
\begin{array}{ll}

\begin{picture}(100,10)
\definecolor{gray1}{gray}{0.7}
\definecolor{gray2}{gray}{0.85}
\definecolor{verde}{RGB}{0,124,0}
\textcolor{gray2}{\put(11,5){\vector(1,0){80}}}

\put(10,5){\textcolor{gray1}{\circle*{3}}} 
\put(20,5){\textcolor{gray1}{\circle*{3}}}
\put(30,5){\textcolor{gray1}{\circle*{3}}}
\put(40,5){\textcolor{verde}{\circle*{3}}}
\put(50,5){\circle*{3}} 
\put(60,5){\circle*{3}}
\put(70,5){\circle*{3}}
\put(80,5){{\circle*{3}}} 
\put(90,5){\circle*{3}}

\end{picture}&\begin{picture}(100,10)

\definecolor{gray1}{gray}{0.7}
\definecolor{gray2}{gray}{0.85}
\definecolor{verde}{RGB}{0,124,0}
\textcolor{gray2}{\put(11,5){\vector(1,0){80}}}

\put(10,5){\textcolor{gray1}{\circle*{3}}} 
\put(20,5){\textcolor{gray1}{\circle*{3}}}
\put(30,5){\textcolor{gray1}{\circle*{3}}}
\put(40,5){\textcolor{verde}{\circle*{3}}}
\put(50,5){\circle*{3}} 
\put(60,5){{\circle*{3}}}
\put(70,5){{\circle*{3}}}
\put(80,5){\circle*{3}} 
\put(90,5){\circle*{3}}
\end{picture}
\\
\hspace{1.9cm}\overline{0}&\hspace{1.9cm}\overline{1}
\end{array}
$$
\end{example}

\begin{example}
Let $S$ be the semigroup $S=\ZZ_{\geq0}^2\times \mathbb{Z}/2\mathbb{Z}\setminus\{(0,1,\overline{0}),(0,0,\overline{1}),(1,0,\overline{1})\}$. The smallest group containing $S$ is $M=\ZZ^2\times \ZZ/2\ZZ$. This yields $N=\mathbb{Z}^2$. The rays are $\rho_1=(1,0)$ and $\rho_2=(0,1)$ in $N$. A direct computation shows that the roots of $S$ are 
\begin{align*}
 \mathcal{R}_{\rho_1}&=\big\{(-1,2+i,j)\mid i\in \ZZ_{\geq 0}\mbox{ and } j\in \ZZ/2\ZZ\big\}\cup \{(-1,1,\overline{1})\}\\
 \mathcal{R}_{\rho_2}&=\big\{(2+i,-1,j)\mid i\in \ZZ_{\geq 0}\mbox{ and } j\in \ZZ/2\ZZ\big\}\cup \{(1,-1,\overline{1})\}
\end{align*}
$$
\begin{array}{ll}

\begin{picture}(100,80)
\definecolor{gray1}{gray}{0.7}
\definecolor{gray2}{gray}{0.85}
\definecolor{verde}{RGB}{0,124,0}
\textcolor{gray2}{\put(11,35){\vector(1,0){80}}}
\textcolor{gray2}{\put(50,-5){\vector(0,1){80}}}

\put(10,-5){\textcolor{gray1}{\circle*{3}}} 
\put(20,-5){\textcolor{gray1}{\circle*{3}}}
\put(30,-5){\textcolor{gray1}{\circle*{3}}}
\put(40,-5){\textcolor{gray1}{\circle*{3}}}
\put(50,-5){\textcolor{gray1}{\circle*{3}}} 
\put(60,-5){\textcolor{gray1}{\circle*{3}}}
\put(70,-5){\textcolor{gray1}{\circle*{3}}}
\put(80,-5){\textcolor{gray1}{\circle*{3}}} 
\put(90,-5){\textcolor{gray1}{\circle*{3}}}

\put(10,5){\textcolor{gray1}{\circle*{3}}} 
\put(20,5){\textcolor{gray1}{\circle*{3}}}
\put(30,5){\textcolor{gray1}{\circle*{3}}}
\put(40,5){\textcolor{gray1}{\circle*{3}}}
\put(50,5){\textcolor{gray1}{\circle*{3}}} 
\put(60,5){\textcolor{gray1}{\circle*{3}}}
\put(70,5){\textcolor{gray1}{\circle*{3}}}
\put(80,5){\textcolor{gray1}{\circle*{3}}} 
\put(90,5){\textcolor{gray1}{\circle*{3}}} 

\put(10,15){\textcolor{gray1}{\circle*{3}}} 
\put(20,15){\textcolor{gray1}{\circle*{3}}}
\put(30,15){\textcolor{gray1}{\circle*{3}}}
\put(40,15){\textcolor{gray1}{\circle*{3}}}
\put(50,15){\textcolor{gray1}{\circle*{3}}} 
\put(60,15){\textcolor{gray1}{\circle*{3}}}
\put(70,15){\textcolor{gray1}{\circle*{3}}}
\put(80,15){\textcolor{gray1}{\circle*{3}}} 
\put(90,15){\textcolor{gray1}{\circle*{3}}} 

\put(10,25){\textcolor{gray1}{\circle*{3}}} 
\put(20,25){\textcolor{gray1}{\circle*{3}}}
\put(30,25){\textcolor{gray1}{\circle*{3}}}
\put(40,25){\textcolor{gray1}{\circle*{3}}}
\put(50,25){\textcolor{gray1}{\circle*{3}}} 
\put(60,25){\textcolor{gray1}{\circle*{3}}}
\put(70,25){\textcolor{verde}{\circle*{3}}}
\put(80,25){\textcolor{verde}{\circle*{3}}} 
\put(90,25){\textcolor{verde}{\circle*{3}}}

\put(10,35){\textcolor{gray1}{\circle*{3}}} 
\put(20,35){\textcolor{gray1}{\circle*{3}}}
\put(30,35){\textcolor{gray1}{\circle*{3}}}
\put(40,35){\textcolor{gray1}{\circle*{3}}}
\put(50,35){\circle*{3}} 
\put(60,35){\circle*{3}}
\put(70,35){\circle*{3}}
\put(80,35){{\circle*{3}}} 
\put(90,35){\circle*{3}}

\put(10,45){\textcolor{gray1}{\circle*{3}}}
\put(20,45){\textcolor{gray1}{\circle*{3}}}
\put(30,45){\textcolor{gray1}{\circle*{3}}}
\put(40,45){\textcolor{gray1}{\circle*{3}}}
\put(50,45){{\circle{3}}}
\put(60,45){{\circle*{3}}}
\put(70,45){{\circle*{3}}}
\put(80,45){\circle*{3}} 
\put(90,45){\circle*{3}} 

\put(10,55){\textcolor{gray1}{\circle*{3}}} 
\put(20,55){\textcolor{gray1}{\circle*{3}}}
\put(30,55){\textcolor{gray1}{\circle*{3}}}
\put(40,55){\textcolor{verde}{\circle*{3}}}
\put(50,55){{\circle*{3}}} 
\put(60,55){\circle*{3}}
\put(70,55){\circle*{3}}
\put(80,55){\circle*{3}} 
\put(90,55){\circle*{3}} 

\put(10,65){\textcolor{gray1}{\circle*{3}}} 
\put(20,65){\textcolor{gray1}{\circle*{3}}}
\put(30,65){\textcolor{gray1}{\circle*{3}}}
\put(40,65){\textcolor{verde}{\circle*{3}}}
\put(50,65){{\circle*{3}}}
\put(60,65){{\circle*{3}}}
\put(70,65){\circle*{3}}
\put(80,65){\circle*{3}} 
\put(90,65){\circle*{3}} 

\put(10,75){\textcolor{gray1}{\circle*{3}}} 
\put(20,75){\textcolor{gray1}{\circle*{3}}}
\put(30,75){\textcolor{gray1}{\circle*{3}}}
\put(40,75){\textcolor{verde}{\circle*{3}}}
\put(50,75){{\circle*{3}}} 
\put(60,75){{\circle*{3}}}
\put(70,75){\circle*{3}}
\put(80,75){\circle*{3}} 
\put(90,75){\circle*{3}} 
\end{picture}&\begin{picture}(100,80)
\definecolor{gray1}{gray}{0.7}
\definecolor{gray2}{gray}{0.85}
\definecolor{verde}{RGB}{0,124,0}
\textcolor{gray2}{\put(11,35){\vector(1,0){80}}}
\textcolor{gray2}{\put(50,-5){\vector(0,1){80}}}

\put(10,-5){\textcolor{gray1}{\circle*{3}}} 
\put(20,-5){\textcolor{gray1}{\circle*{3}}}
\put(30,-5){\textcolor{gray1}{\circle*{3}}}
\put(40,-5){\textcolor{gray1}{\circle*{3}}}
\put(50,-5){\textcolor{gray1}{\circle*{3}}} 
\put(60,-5){\textcolor{gray1}{\circle*{3}}}
\put(70,-5){\textcolor{gray1}{\circle*{3}}}
\put(80,-5){\textcolor{gray1}{\circle*{3}}} 
\put(90,-5){\textcolor{gray1}{\circle*{3}}}

\put(10,5){\textcolor{gray1}{\circle*{3}}} 
\put(20,5){\textcolor{gray1}{\circle*{3}}}
\put(30,5){\textcolor{gray1}{\circle*{3}}}
\put(40,5){\textcolor{gray1}{\circle*{3}}}
\put(50,5){\textcolor{gray1}{\circle*{3}}} 
\put(60,5){\textcolor{gray1}{\circle*{3}}}
\put(70,5){\textcolor{gray1}{\circle*{3}}}
\put(80,5){\textcolor{gray1}{\circle*{3}}} 
\put(90,5){\textcolor{gray1}{\circle*{3}}} 

\put(10,15){\textcolor{gray1}{\circle*{3}}} 
\put(20,15){\textcolor{gray1}{\circle*{3}}}
\put(30,15){\textcolor{gray1}{\circle*{3}}}
\put(40,15){\textcolor{gray1}{\circle*{3}}}
\put(50,15){\textcolor{gray1}{\circle*{3}}} 
\put(60,15){\textcolor{gray1}{\circle*{3}}}
\put(70,15){\textcolor{gray1}{\circle*{3}}}
\put(80,15){\textcolor{gray1}{\circle*{3}}} 
\put(90,15){\textcolor{gray1}{\circle*{3}}} 

\put(10,25){\textcolor{gray1}{\circle*{3}}} 
\put(20,25){\textcolor{gray1}{\circle*{3}}}
\put(30,25){\textcolor{gray1}{\circle*{3}}}
\put(40,25){\textcolor{gray1}{\circle*{3}}}
\put(50,25){\textcolor{gray1}{\circle*{3}}} 
\put(60,25){\textcolor{verde}{\circle*{3}}}
\put(70,25){\textcolor{verde}{\circle*{3}}}
\put(80,25){\textcolor{verde}{\circle*{3}}} 
\put(90,25){\textcolor{verde}{\circle*{3}}}

\put(10,35){\textcolor{gray1}{\circle*{3}}} 
\put(20,35){\textcolor{gray1}{\circle*{3}}}
\put(30,35){\textcolor{gray1}{\circle*{3}}}
\put(40,35){\textcolor{gray1}{\circle*{3}}}
\put(50,35){\circle{3}} 
\put(60,35){\circle{3}}
\put(70,35){\circle*{3}}
\put(80,35){{\circle*{3}}} 
\put(90,35){\circle*{3}}

\put(10,45){\textcolor{gray1}{\circle*{3}}}
\put(20,45){\textcolor{gray1}{\circle*{3}}}
\put(30,45){\textcolor{gray1}{\circle*{3}}}
\put(40,45){\textcolor{verde}{\circle*{3}}}
\put(50,45){{\circle*{3}}}
\put(60,45){{\circle*{3}}}
\put(70,45){{\circle*{3}}}
\put(80,45){\circle*{3}} 
\put(90,45){\circle*{3}} 

\put(10,55){\textcolor{gray1}{\circle*{3}}} 
\put(20,55){\textcolor{gray1}{\circle*{3}}}
\put(30,55){\textcolor{gray1}{\circle*{3}}}
\put(40,55){\textcolor{verde}{\circle*{3}}}
\put(50,55){{\circle*{3}}} 
\put(60,55){\circle*{3}}
\put(70,55){\circle*{3}}
\put(80,55){\circle*{3}} 
\put(90,55){\circle*{3}} 

\put(10,65){\textcolor{gray1}{\circle*{3}}} 
\put(20,65){\textcolor{gray1}{\circle*{3}}}
\put(30,65){\textcolor{gray1}{\circle*{3}}}
\put(40,65){\textcolor{verde}{\circle*{3}}}
\put(50,65){{\circle*{3}}}
\put(60,65){{\circle*{3}}}
\put(70,65){\circle*{3}}
\put(80,65){\circle*{3}} 
\put(90,65){\circle*{3}} 

\put(10,75){\textcolor{gray1}{\circle*{3}}} 
\put(20,75){\textcolor{gray1}{\circle*{3}}}
\put(30,75){\textcolor{gray1}{\circle*{3}}}
\put(40,75){\textcolor{verde}{\circle*{3}}}
\put(50,75){{\circle*{3}}} 
\put(60,75){{\circle*{3}}}
\put(70,75){\circle*{3}}
\put(80,75){\circle*{3}} 
\put(90,75){\circle*{3}} 
\end{picture}\\
&\\
\hspace{1.9cm}\overline{0}&\hspace{1.9cm}\overline{1}
\end{array}
$$
\end{example}

\begin{proposition}\label{inclusion}
Let $S$ be a cancellative monoid and let $S^{sat}$ be its saturation. Then every Demazure root of $S$ is also a Demazure root of $S^{sat}$.
\end{proposition}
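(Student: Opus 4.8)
The plan is to keep the distinguished ray $\rho$ fixed and transfer both defining conditions of Definition~\ref{definition-root} from $S$ to $S^{sat}$. The first step is to observe that $S$ and $S^{sat}$ have the same associated group $M$ and, crucially, the same dual monoid. Indeed, since $S\subseteq S^{sat}$ one gets $(S^{sat})^*\subseteq S^*$; conversely, if $u\in S^*$ and $a\in S^{sat}$, then $ka\in S$ for some positive integer $k$, whence $u(a)=\tfrac1k\,u(ka)\ge 0$, giving $S^*\subseteq (S^{sat})^*$. Hence $S^*=(S^{sat})^*$, and in particular $S^*(1)=(S^{sat})^*(1)$. Consequently, if $\alpha$ is a Demazure root of $S$ with distinguished ray $\rho$, then $\rho\in (S^{sat})^*(1)$ and $\rho(\alpha)=-1$, so condition $(i)$ holds verbatim for $S^{sat}$.

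It then remains to verify condition $(ii)$ for $S^{sat}$, that is, that $a+\alpha\in S^{sat}$ whenever $a\in S^{sat}$ and $\rho(a)>0$. Given such an $a$, I would choose a positive integer $k$ with $ka\in S$ and note that $\rho(ka)=k\rho(a)>0$. The idea is to apply condition $(ii)$ for the root $\alpha$ of $S$ repeatedly, starting from $ka\in S$, to climb up through the elements $ka+j\alpha$. Concretely, I would prove by induction on $j$ that $ka+j\alpha\in S$ for $0\le j\le k\rho(a)$: the base case $j=0$ is $ka\in S$, and for the inductive step, as long as $\rho(ka+j\alpha)=k\rho(a)-j>0$ the element $ka+j\alpha\in S$ satisfies the hypothesis of $(ii)$, so $ka+(j+1)\alpha\in S$. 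Since $\rho(a)$ is a positive integer we have $\rho(a)\ge 1$, hence $k\le k\rho(a)$, and taking $j=k$ yields $k(a+\alpha)=ka+k\alpha\in S$. By the definition of saturation this gives $a+\alpha\in S^{sat}$, establishing $(ii)$ and completing the proof.

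The only delicate point is the inductive escalation in the second step: one must ensure that the $\rho$-value stays strictly positive throughout all $k$ applications of condition $(ii)$, i.e. that $k\rho(a)-j>0$ for every $j$ at which $(ii)$ is invoked (the last such $j$ being $k-1$, where $k\rho(a)-(k-1)=k(\rho(a)-1)+1\ge 1$). I expect this bookkeeping, rather than any deep structural input, to be the main thing to get right. It is worth emphasizing that the argument uses neither finite generation of $S$ nor Lemma~\ref{lemma:base}, relying only on the saturation identity $S^*=(S^{sat})^*$ together with the elementary induction above.
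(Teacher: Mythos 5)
Your proof is correct and takes essentially the same route as the paper: both arguments pick $k$ with $ka\in S$, climb by repeatedly applying condition $(ii)$ of Definition~\ref{definition-root} to add $\alpha$ while tracking that the $\rho$-value stays strictly positive (the paper's count $\rho(j\alpha+km)\ge k-j$ is your $k\rho(a)-j>0$), and conclude $k(a+\alpha)\in S$, hence $a+\alpha\in S^{sat}$ by saturation. Your preliminary verification that $S^*=(S^{sat})^*$, which settles condition $(i)$, is a point the paper leaves implicit, so this is a slightly more complete write-up of the same argument.
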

\begin{proof}
Letting $M$ be the group associated to $S$, we let $\alpha\in M$ be a Demazure root of $S$ with distinguished ray $\rho$. Pick any $m\in S^{sat}$ such that $\rho(m)>0$ and let $k$ be the smallest positive integer such that $k m\in S$. Since  $\rho(km)> 0$ we have $\alpha+km\in S$. If $k=1$ then $\alpha+m\in S^{sat}$ and so the proof is complete. Assume now $k>1$. We have $\rho(km)\geq k$ and so $\rho(\alpha+km)\geq k-1$. Hence $\alpha+(\alpha+k m)=2\alpha+km\in S$ and $\rho(2\alpha+km)\geq k-2$ since $\alpha$ is a Demazure root of $S$. Proceeding inductively, we obtain that $k\alpha+km=k(\alpha+m)\in S$ and by the definition of saturation we conclude $\alpha+m\in S^{sat}$ ending the proof.
\end{proof}

\subsection{Homogeneous locally nilpotent derivation on semigroup algebras}

In the previous section we define a generalization of Demazure roots to the case of cancellative monoids. In this section we prove that, as in the affine toric case, Demazure roots classify locally nilpotent derivations on the semigroup algebra of the corresponding cancellative monoid. 

Our proof is not a straightforward generalization of the affine toric case as one can imagine since many of the usual properties of locally nilpotent derivations are lost in this more general setting where the algebra $B$ is not necessarily a domain. For instance, the kernel of a locally nilpotent derivation $\partial$ on a domain $B$ is factorially closed, i.e., if a product $ab$ belongs to $\ker\partial$ with $a,b\neq 0$, then $a$ and $b$ belong to $\ker\partial$. This is no longer the case if $B$ is not a domain as the following example shows.

\begin{example}
  Let $S=\ZZ_{\geq 0}\times \ZZ/2\ZZ$ as in Example~\ref{example:Z-Z-2}. The corresponding semigroup algebra $B=\KK[S]$ is equal to $\KK[x,y]/(y^2-1)$ by setting $x=\chi^{(1,\overline{0})}$ and $y=\chi^{(0,\overline{1})}$. On $B$ we have the homogeneous locally nilpotent derivation
  $$\partial\colon B\to B\quad\mbox{given by}\quad \partial=\frac{\partial}{\partial x}\,.$$
  Here, the function $f=x(y-1)$ does not belong to the kernel, since $\partial(f)=y-1$. On the other hand, $f\cdot(y+1)=0$ and so $f\cdot(y+1)\in\ker\partial$ .
\end{example}
 
Nevertheless, the above phenomenon does not occur for homogeneous elements as we show in the following Lemma~\ref{lemma: closed}.  For the proof, recall that a homogeneous element is never a zero divisor in $\KK[S]$ as we showed in Remark~\ref{remark:zero-divisor} above.

\begin{lemma}\label{lemma: closed}
Letting $S$ be a cancellative monoid, we let $\partial\colon \KK[S]\to \KK[S]$ be a homogeneous locally nilpotent derivation. If $\partial(f\chi^m)=0$ for some non-zero $f\in\KK[S]$ and some $m\in S$, then $\partial(\chi^m)=0$ and $\partial(f)=0$.
\end{lemma}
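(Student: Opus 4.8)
The plan is to exploit the homogeneous decomposition of $f$ together with the fact that $\partial$ is homogeneous of a fixed degree $\alpha$ (by Lemma~\ref{lemma:graded}) and that homogeneous elements are not zero-divisors (Remark~\ref{remark:zero-divisor}). First I would write $f=\sum_{i=1}^r \lambda_i \chi^{m_i}$ with the $m_i\in S$ distinct and all $\lambda_i\neq 0$. Then $f\chi^m=\sum_i \lambda_i \chi^{m_i+m}$, and since $\partial$ is homogeneous of degree $\alpha$ we have
$$\partial(f\chi^m)=\sum_{i=1}^r \lambda_i\, \partial(\chi^{m_i+m})=\sum_{i=1}^r \lambda_i\, c_i\, \chi^{m_i+m+\alpha}\,,$$
where each $c_i\in\KK$ is the scalar determined by $\partial(\chi^{m_i+m})=c_i\chi^{m_i+m+\alpha}$ (setting $c_i=0$ when $m_i+m+\alpha\notin S$). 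Because $S$ is cancellative the exponents $m_i+m+\alpha$ are pairwise distinct, so these are linearly independent homogeneous pieces; hence $\partial(f\chi^m)=0$ forces $\lambda_i c_i=0$, and therefore $c_i=0$ for every $i$. This says precisely that $\partial(\chi^{m_i+m})=0$ for all $i$.

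Next I would extract from these vanishings the two desired conclusions. Applying the Leibniz rule to the product $\chi^{m_i}\cdot \chi^m$ gives
$$0=\partial(\chi^{m_i+m})=\chi^{m_i}\partial(\chi^m)+\chi^m\partial(\chi^{m_i})\,.$$
The cleanest route is to first show $\partial(\chi^m)=0$. Taking any single index, say $i=1$, the identity $0=\chi^{m_1}\partial(\chi^m)+\chi^m\partial(\chi^{m_1})$ relates $\partial(\chi^m)$ and $\partial(\chi^{m_1})$, but to isolate $\partial(\chi^m)$ I need a little more. Here I would use that $\partial$ is \emph{locally nilpotent}: consider the kernel-type argument on $\chi^m$ directly. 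Since $\partial(\chi^{m_1+m})=0$ and $\chi^{m_1}$ is not a zero-divisor, the two summands above are homogeneous of the same degree $m_1+m+\alpha$ and must cancel, giving $\chi^{m_1}\partial(\chi^m)=-\chi^m\partial(\chi^{m_1})$; both sides are monomials (up to scalar) in the same degree. Writing $\partial(\chi^m)=a\chi^{m+\alpha}$ and $\partial(\chi^{m_1})=b\chi^{m_1+\alpha}$ with $a,b\in\KK$, this reads $a=-b$. I expect the genuine content lies in promoting this single relation to $a=0$.

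To force $\partial(\chi^m)=0$, I would iterate $\partial$ and use local nilpotency together with non-zero-divisibility of homogeneous elements. For a homogeneous monomial one computes $\partial^{k}(\chi^{m})=\big(\textstyle\prod_{j=0}^{k-1}(\text{scalar at step }j)\big)\chi^{m+k\alpha}$ as long as the intermediate exponents lie in $S$; since the single scalar $a$ governs the first step and homogeneous pieces never vanish as zero-divisors, the nilpotency $\partial^{k}(\chi^m)=0$ can hold only if $a=0$, exactly as in the proof of Lemma~\ref{Lemma: lnd}. This yields $\partial(\chi^m)=0$. Plugging back into $0=\chi^{m_i}\partial(\chi^m)+\chi^m\partial(\chi^{m_i})=\chi^m\partial(\chi^{m_i})$ and using once more that $\chi^m$ is not a zero-divisor (Remark~\ref{remark:zero-divisor}) gives $\partial(\chi^{m_i})=0$ for every $i$, whence $\partial(f)=\sum_i\lambda_i\partial(\chi^{m_i})=0$.

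The main obstacle, I expect, is the step isolating $\partial(\chi^m)=0$ from the mere relation $a=-b$: a priori both $\chi^m$ and the individual $\chi^{m_i}$ could have nonzero image and only their combination in $\chi^{m+m_i}$ cancels. The resolution is to lean on local nilpotency of $\partial$ applied to the \emph{single} monomial $\chi^m$ (not to the product), which is where the non-zero-divisor property of homogeneous elements becomes essential — it guarantees that the leading scalar cannot be cancelled by a vanishing monomial, so a nonzero first-step scalar would propagate forever and contradict nilpotency. Once $\partial(\chi^m)=0$ is secured, the conclusion $\partial(f)=0$ is immediate from Leibniz and Remark~\ref{remark:zero-divisor}.
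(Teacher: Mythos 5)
Your reduction to monomials is fine: writing $f=\sum_i\lambda_i\chi^{m_i}$ and using cancellativity to separate the degrees $m_i+m+\alpha$ correctly yields $\partial(\chi^{m_i+m})=0$ for every $i$, and the Leibniz relation $a=-b_i$ (with $\partial(\chi^m)=a\chi^{m+\alpha}$, $\partial(\chi^{m_i})=b_i\chi^{m_i+\alpha}$) is also correct. The gap is in the step that is supposed to carry all the weight: the claim that local nilpotency of $\partial$ on the \emph{single} monomial $\chi^m$ forces the first-step scalar $a$ to vanish. This is false. Writing $\partial^{k}(\chi^m)=\bigl(\prod_{j=0}^{k-1}s_j\bigr)\chi^{m+k\alpha}$, nilpotency only forces \emph{some} scalar $s_j$ in the chain to vanish (possibly because $m+j\alpha$ exits $S$), not the first one $s_0=a$. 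Indeed, if your claim were true it would apply to every $m\in S$ and hence prove that every homogeneous locally nilpotent derivation is identically zero: already $\partial=d/dx$ on $\KK[x]$ with $m=2$ gives $\partial(x^2)=2x\neq 0$ while $\partial^3(x^2)=0$, so a nonzero first scalar does not ``propagate forever.'' Note also that nilpotency must enter in an essential, genuinely two-variable way: for the homogeneous (non-nilpotent) derivation $\partial=x\frac{\partial}{\partial x}-y\frac{\partial}{\partial y}$ on $\KK[x,y]$ one has $\partial(xy)=0$, $a=1=-b$, yet $\partial(x)\neq0$; so the relation $a=-b$ you reach after step 2, which is all that homogeneity and Leibniz provide, can never by itself yield $a=0$.

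The paper's proof supplies the missing idea, and it does not even need your monomial decomposition. Let $a$ and $b$ be the \emph{minimal} non-negative integers with $\partial^{a+1}(f)=0$ and $\partial^{b+1}(\chi^m)=0$ (these exist by local nilpotency applied to both factors). The higher Leibniz rule gives
\begin{equation*}
\partial^{a+b}(f\chi^m)=\sum_{i+j=a+b}\binom{a+b}{i}\,\partial^{i}(f)\,\partial^{j}(\chi^m)=\binom{a+b}{a}\,\partial^{a}(f)\,\partial^{b}(\chi^m)\,,
\end{equation*}
since every other summand has $i>a$ or $j>b$. Because $\partial^{b}(\chi^m)$ is a nonzero homogeneous element, hence not a zero-divisor by Remark~\ref{remark:zero-divisor}, and $\partial^{a}(f)\neq 0$, this expression is nonzero. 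But $\partial(f\chi^m)=0$ would force $\partial^{a+b}(f\chi^m)=0$ whenever $a+b\geq 1$; hence $a+b=0$, i.e.\ $\partial(f)=\partial(\chi^m)=0$. The crucial point your proposal misses is to track the nilpotency orders of \emph{both} factors simultaneously via the binomial expansion, rather than trying to extract the conclusion from the nilpotency of one monomial alone.
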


\begin{proof}
We follow \cite[Proposition 1.10~(c)]{freudenburg2006algebraic} adapted to our context. Let $m\in S$ and $f\in\KK[S]$ with $f\neq 0$. We let $a$ and $b$ be the smallest non-negative integers such that $\partial^{a+1}(f)=0$ and $\partial^{b+1}(\chi^m)=0$, respectively. We have
$$\partial^{a+b+1}(f\chi^m)=\sum_{i+j=a+b+1}\binom{a+b+1}{i}\cdot\partial^{i}(f)\cdot\partial^{j}(\chi^m)\,.$$
Since $i+j=a+b+1$ then we have either $i>a$ or $j>b$ so $\partial^{i}(f)\partial^{j}(\chi^m)=0$ and so $\partial^{a+b+1}(f\chi^m)=0$. On the other hand, 
$$\partial^{a+b}(f\chi^m)=\sum_{i+j=a+b}\binom{a+b}{i}\cdot\partial^{i}(f)\cdot\partial^{j}(\chi^m)=\binom{a+b}{a}\cdot\partial^{a}(f)\cdot\partial^{b}(\chi^m)\,.$$
The last equality follows with the same argument as above since all the summands are zero with the only exception in the case where $i=a$ and $j=b$. This yields  $\partial^{a+b}(f\chi^m)\neq 0$. Indeed, $\partial^{a}(f)\neq 0$, $\partial^{b}(\chi^m)\neq 0$ and $\partial^{b}(\chi^m)$ is not a zero divisor by Remark~\ref{remark:zero-divisor} since it is a homogeneous element. Since $f\chi^m\neq 0$ and $\partial(f\chi^m)=0$ by hypothesis, we conclude $a+b+1=1$ and so $a=b=0$ which in turn implies $\partial(\chi^m)=\partial(f)=0$ proving the lemma.
\end{proof}

Similarly to the affine toric case, for every Demazure root $\alpha$ of $S$ with distinguished ray $\rho\in S^*(1)$, we define a homogeneous  derivation $$\partial_\alpha\colon \KK[S]\rightarrow \KK[S] \quad\mbox{via}\quad   \chi^m\mapsto\rho(m)\chi^{m+\alpha}\,.$$ 
The degree of $\partial_\alpha$ is $\alpha$. The Leibniz rule follows by a straightforward computation. The fact that $\alpha$ is a Demazure root ensures that $\partial_\alpha$ is well-defined on $\KK[S]$ since $\chi^{m+\alpha}\in \KK[S]$ for every $m$ with $\rho(m)\neq 0$.

 \begin{lemma}\label{homogeneous derivation}
Let $S$ be a cancellative monoid and let $\alpha$ be a Demazure root of $S$. Then, the homogeneous derivation $\partial_{\alpha}$ is locally nilpotent. 
\end{lemma}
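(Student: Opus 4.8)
The plan is to compute the iterates $\partial_\alpha^{k}(\chi^m)$ on the homogeneous generators $\chi^m$, $m\in S$, and to read off local nilpotence directly from the coefficients. The starting point is the numerical identity coming from the distinguished ray: since $\rho(\alpha)=-1$ (the first defining condition in Definition~\ref{definition-root}), we have $\rho(m'+\alpha)=\rho(m')-1$ for every $m'\in M$. Combined with $\partial_\alpha(\chi^{m'})=\rho(m')\chi^{m'+\alpha}$, a short induction on $k$ should yield the closed formula
\[
\partial_\alpha^{k}(\chi^m)=\Bigl(\prod_{j=0}^{k-1}\bigl(\rho(m)-j\bigr)\Bigr)\,\chi^{m+k\alpha}\,,
\]
where at each step the new factor $\rho(m+(k-1)\alpha)=\rho(m)-(k-1)$ is supplied by the identity above.

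Next I would exploit that $\rho\in S^*(1)\subset S^*$, so that $\rho(m)\geq 0$ for every $m\in S$. Writing $n=\rho(m)$, the falling-factorial coefficient $\prod_{j=0}^{k-1}(n-j)$ vanishes as soon as $k\geq n+1$, since then the factor $(n-n)=0$ occurs; hence $\partial_\alpha^{\,n+1}(\chi^m)=0$. Because every $f\in\KK[S]$ is a finite combination $\sum_i\lambda_i\chi^{m_i}$, setting $N=\max_i\bigl(\rho(m_i)+1\bigr)$ gives $\partial_\alpha^{N}(f)=0$, which is exactly local nilpotence on all of $\KK[S]$.

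The main obstacle is the \emph{well-definedness} of the intermediate steps: since $S$ is only a monoid, the exponents $m+k\alpha$ appearing in the induction need not lie in $S$, so $\chi^{m+k\alpha}$ is a priori meaningless. This is precisely where the second defining condition of Definition~\ref{definition-root} does the work. I would argue by induction that $m+k\alpha\in S$ for all $0\leq k\leq \rho(m)$: the base case $k=0$ is clear, and if $m+(k-1)\alpha\in S$ with $\rho\bigl(m+(k-1)\alpha\bigr)=\rho(m)-(k-1)>0$ (which holds exactly when $k\leq \rho(m)$), then the root condition gives $m+k\alpha=\bigl(m+(k-1)\alpha\bigr)+\alpha\in S$. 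Thus every exponent carrying a nonzero coefficient in the displayed formula genuinely lies in $S$, so the identity is valid inside $\KK[S]$; and for $k=\rho(m)+1$ the coefficient is already $0$, so no claim about $m+(\rho(m)+1)\alpha$ is needed. Once this bookkeeping is settled the conclusion is immediate, so the entire difficulty is confined to tracking membership in $S$ via the root condition.
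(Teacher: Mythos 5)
Your proof is correct and follows essentially the same route as the paper's: iterate $\partial_\alpha$ on a monomial $\chi^m$, observe that the coefficient picks up the factor $\rho(m+k\alpha)=\rho(m)-k$ at each step, and conclude vanishing at $k=\rho(m)+1$, then extend to all of $\KK[S]$ by linearity. Your explicit induction showing $m+k\alpha\in S$ for $0\leq k\leq\rho(m)$ is bookkeeping the paper leaves implicit inside its ``straightforward computation,'' so your write-up is if anything slightly more careful, but it is not a different argument.
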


\begin{proof}
Let $k$ be a positive integer and let $m\in S$. Let also $\rho$ be the distinguished ray of $\alpha$ so that $\rho(\alpha)=-1$. A straightforward computation shows that
$$\partial^{k+1}(\chi^m)=\rho(m+k\alpha)\cdot \partial^k(\chi^m)\cdot\chi^\alpha\,.$$
Now, taking $k=\rho(m)$ we have
$$\rho(m+k\alpha)=\rho(m)+k\rho(\alpha)=\rho(m)-\rho(m)=0\,.$$
Hence $\partial^{k+1}(\chi^m)=0$ and so $\partial_\alpha$ is locally nilpotent.
\end{proof}

The following lemma characterizes the kernel of a homogeneous locally nilpotent derivation on a semigroup algebra.

\begin{lemma}\label{lemma: face and kernel}
Let $S$ be a cancellative monoid and let $\partial\colon \KK[S]\to \KK[S]$ be a  homogeneous locally nilpotent derivation. Then $\operatorname{ker}\partial=\KK[F]$ where $F$ is a face of $S$.
\end{lemma}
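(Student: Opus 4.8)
The plan is to show directly that the kernel is spanned by the monomials it contains, then identify the corresponding index set as a face. First I would observe that since $\partial$ is homogeneous of some fixed degree $\alpha\in M_S$ (well-defined by Lemma~\ref{lemma:graded}), it maps the one-dimensional component $\KK\chi^m$ into $\KK\chi^{m+\alpha}$. Because $S$ is cancellative, the translation $m\mapsto m+\alpha$ is injective, so for $f=\sum_m\lambda_m\chi^m$ the nonzero pieces of $\partial(f)$ land in pairwise distinct graded components; hence $\partial(f)=0$ if and only if $\partial(\chi^m)=0$ for every $m$ with $\lambda_m\neq 0$. Setting
$$F=\{m\in S\mid \partial(\chi^m)=0\}\,,$$
this shows $\ker\partial=\bigoplus_{m\in F}\KK\chi^m=\KK[F]$, so it only remains to prove that $F$ is a face of $S$.

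Next I would check that $F$ is a submonoid. Clearly $0\in F$ since $\partial(\chi^0)=\partial(1)=0$. If $m_1,m_2\in F$, then Leibniz's rule gives
$$\partial(\chi^{m_1+m_2})=\chi^{m_1}\partial(\chi^{m_2})+\chi^{m_2}\partial(\chi^{m_1})=0\,,$$
so $m_1+m_2\in F$. Thus $F$ is a submonoid of $S$.

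The key step is the face property: I must show that $m_1+m_2\in F$ forces $m_1,m_2\in F$. Here I would invoke Lemma~\ref{lemma: closed}, which is exactly tailored to this purpose. Suppose $m_1+m_2\in F$, i.e. $\partial(\chi^{m_1}\chi^{m_2})=\partial(\chi^{m_1+m_2})=0$. Applying Lemma~\ref{lemma: closed} with the nonzero element $f=\chi^{m_1}$ and the exponent $m_2\in S$ yields both $\partial(\chi^{m_2})=0$ and $\partial(\chi^{m_1})=0$, that is, $m_1,m_2\in F$. This is the only nontrivial part of the argument, and I expect the whole difficulty to be hidden in Lemma~\ref{lemma: closed}, whose proof relies on the fact that homogeneous elements are not zero-divisors (Remark~\ref{remark:zero-divisor}); once that lemma is in hand, the face property is immediate. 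Combining the three steps, $F$ is a face of $S$ with $\ker\partial=\KK[F]$, which proves the statement.
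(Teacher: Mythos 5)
Your proposal is correct and follows essentially the same route as the paper's own proof: decompose an element of the kernel into homogeneous pieces and use cancellativity to see the kernel is spanned by the monomials it contains, verify $F$ is a submonoid via Leibniz's rule, and obtain the face property from Lemma~\ref{lemma: closed}. The only (cosmetic) difference is that you explicitly note $0\in F$; otherwise the two arguments coincide step by step.
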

\begin{proof}
Let $\alpha$ be the degree of $\partial$ and let $f=\sum_{i\in I}\lambda_i\chi^{a_i}\in\KK[S]$ with $I$ finite and $\lambda_i\neq 0$. Then we have
$$\partial(f)=\sum_{i\in I}\lambda_i\partial(\chi^{a_i})\,.$$
Since the degree of $\partial(\chi^{a_i})=a_i+\alpha$ and these elements are distinct by the cancellative property, we conclude that $f\in\ker\partial$ if and only if  $\chi^{a_i}\in\ker\partial$ for all $i\in I$.

Let now $F=\{a\in S\mid \chi^{a}\in\ker\partial\}$. The Leibniz rule ensures that $F$ is a subsemigroup of $S$ and the above consideration proves that $\ker\partial=\KK[F]$. Let us now prove that $F$ is a face of $S$. Let $a,b\in S$ and assume that $a+b\in F$. Then  $\partial(\chi^a\chi^b)=\partial(\chi^{a+b})=0$. By Lemma~\ref{lemma: closed} we have  $\partial(\chi^a)=\partial(\chi^b)=0$ which yields $a,b\in F$ proving the lemma.
\end{proof}

\begin{theorem}\label{theorem:equivalence}
Let $\partial\colon \KK[S]\rightarrow \KK[S]$ be a  homogeneous locally nilpotent derivation of degree $\alpha$ and $\partial\neq 0$. Then 
$\alpha$ is a Demazure root of $S$ and $\partial=\lambda\partial_{\alpha}$ for some $\lambda\in \KK^{*}$.
\end{theorem}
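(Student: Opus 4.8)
The plan is to read $\partial$ off from its action on the basis $\{\chi^m\}_{m\in S}$. Since $\partial$ is homogeneous of degree $\alpha$ (Lemma~\ref{lemma:graded}), each $\partial(\chi^m)$ lies in the one-dimensional component $\KK\cdot\chi^{m+\alpha}$, so I may write $\partial(\chi^m)=c_m\chi^{m+\alpha}$ with $c_m\in\KK$ and $c_m=0$ whenever $m+\alpha\notin S$. Expanding $\partial(\chi^m\chi^{m'})$ by Leibniz's rule gives the conditional additivity $c_{m+m'}=c_m+c_{m'}$ for all $m,m'\in S$ with $m+m'+\alpha\in S$. Because $\partial\neq0$ there is $m_0\in S$ with $c_{m_0}\neq0$, hence $m_0+\alpha\in S$; inserting $m_0$ (so that every sum I form keeps $\alpha$ inside $S$) and cancelling $c_{m_0}$ upgrades this to full additivity $c_{m+m'}=c_m+c_{m'}$ on all of $S$. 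Thus $c\colon S\to(\KK,+)$ is a monoid homomorphism and extends uniquely to a group homomorphism $\phi\colon M\to\KK$; as $\KK$ is torsion-free, $\phi$ kills the torsion of $M$.

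Next I pin down $\phi$ using local nilpotency, via the trajectory identity $\partial^{k}(\chi^m)=\big(\prod_{j=0}^{k-1}\phi(m+j\alpha)\big)\chi^{m+k\alpha}$, valid as long as the points $m+j\alpha$ stay in $S$. First, $\phi(\alpha)\neq0$: otherwise $\phi$ is constant along the trajectory of $m_0$, which therefore never leaves $S$ and never vanishes, contradicting that $\partial$ is locally nilpotent. So I set $\lambda:=-\phi(\alpha)\in\KK^{*}$ and $\rho:=\phi/\lambda$, a homomorphism with $\rho(\alpha)=-1$. Local nilpotency applied to any $m$ with $c_m\neq0$ and $m+\alpha\in S$ forces a factor $\phi(m+j\alpha)=\phi(m)+j\phi(\alpha)$ to vanish for some integer $j\ge 0$, i.e.\ $\rho(m)=j\in\ZZ_{>0}$; a shift by a large multiple of $m_0$ then yields $\rho(m)\in\ZZ$ for every $m\in S$, so $\rho\in N=\homo(M,\ZZ)$. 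Finally, if $\rho(m)<0$ for some $m\in S$ then $c_m\neq0$ forces $m+\alpha\in S$, and since $\rho(m+j\alpha)=\rho(m)-j$ stays negative the trajectory never leaves $S$ and no factor vanishes, again contradicting local nilpotency; hence $\rho\ge0$ on $S$, that is $\rho\in S^{*}$. At this point $\partial(\chi^m)=\lambda\rho(m)\chi^{m+\alpha}=\lambda\partial_\alpha(\chi^m)$, so $\partial=\lambda\partial_\alpha$, and condition $(ii)$ of Definition~\ref{definition-root} holds because $\rho(m)>0$ gives $c_m\neq0$ and hence $m+\alpha\in S$.

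It remains to prove condition $(i)$, namely that the primitive vector $\rho$ (primitive because $\rho(\alpha)=-1$) spans a ray of $S^{*}$; this is the main obstacle, since the ambient algebra need not be a domain and $S$ need not be saturated. I would first transfer condition $(ii)$ to the saturation $S^{sat}$ by the inductive argument in the proof of Proposition~\ref{inclusion} (which uses only $(ii)$ and $\rho(\alpha)=-1$, not the ray property), so that $\chi^m\mapsto\lambda\rho(m)\chi^{m+\alpha}$ defines a nonzero homogeneous locally nilpotent derivation on $\KK[S^{sat}]$ as well, with $\rho\in S^{*}=(S^{sat})^{*}$. Working in the cone $\sigma=\cone(S^{*})\subset N_\RR$, whose dual is $\sigma^{\vee}=\cone(S^{sat})$, I would argue by cone duality: if $u\in\sigma(1)$ is a ray with $u(\alpha)<0$ that is not proportional to $\rho$, then $\rho$ does not vanish identically on the facet $\sigma^{\vee}\cap u^{\perp}$, so a lattice point $m$ of that facet with $\rho(m)>0$ satisfies $u(m+\alpha)=u(\alpha)<0$, whence $m+\alpha\notin\sigma^{\vee}$, contradicting the transferred $(ii)$. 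Since $\rho(\alpha)=-1<0$ forces at least one ray to pair negatively with $\alpha$, that ray must be $\RR_{\ge0}\rho$, giving $\rho\in S^{*}(1)$ and concluding that $\alpha$ is a Demazure root. The delicate points I expect are the torsion (handled throughout by the splitting $M=L\oplus T$ of Lemma~\ref{remark:saturation}, so that $\rho$ and $\sigma$ live on the free part $L$) and the reduction to $S^{sat}$, which is precisely where one reconnects with the classical toric description of \cite{liendo2010affine} recalled in Definition~\ref{defi:root}.
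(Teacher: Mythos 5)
Your first half is correct, and it works in the full generality of the statement: the coefficients $c_m$ are in fact \emph{unconditionally} additive (if $m+m'+\alpha\notin S$ then $c_{m}=c_{m'}=c_{m+m'}=0$, since e.g.\ $m+\alpha\in S$ would force $m+m'+\alpha\in S$), so $c$ extends to $\phi\colon M\to\KK$, and your trajectory arguments correctly give $\phi(\alpha)\neq0$, $\rho(S)\subset\ZZ_{\geq 0}$, condition $(ii)$, and $\partial=\lambda\partial_\alpha$. (One small imprecision: when a trajectory exits $S$, the vanishing of the last factor $\phi(m+j\alpha)$ comes from well-definedness of $\partial$ on $\KK[S]$, not from local nilpotency; the conclusion is unaffected.) This part is a genuine variant of the paper's argument, which instead extends $\partial$ to $\KK[S']$ with $S'$ generated by $S$ and $-\alpha$ and invokes the kernel-face Lemma~\ref{lemma: face and kernel}.

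The genuine gap is in your proof of condition $(i)$. The theorem is stated for an \emph{arbitrary} cancellative monoid $S$ --- this is exactly the generality the paper advertises (``non-necessarily affine''), and the corollary following the theorem adds finite generation as a separate hypothesis --- but your cone-duality argument silently assumes $S$ is finitely generated at two points: (a) you use that $\sigma=\cone(S^*)$ and $\sigma^\vee$ are rational polyhedral cones (rays, facets, the inclusion-reversing face bijection, facets spanned by lattice points), and (b) you need lattice points of $\sigma^\vee$ to lie in $S^{sat}$ in order to contradict the transferred condition $(ii)$; this is precisely Lemma~\ref{lemma:dual-dual}, which is stated, and is only true, for finitely generated monoids. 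For instance, $S=\{(x,y)\in\ZZ^2\mid y>0\}\cup\{(x,0)\mid x\geq 0\}$ is cancellative, not finitely generated, has $S^{sat}=S$, yet $\widetilde{S}=\{(x,y)\mid y\geq 0\}$; so the identification $\sigma^\vee\cap M=S^{sat}$ on which step (b) rests fails, and your argument cannot rule out the offending ray $u$. The irony is that you do not need cones at all: from what you already proved, every $m\in S$ satisfies $m+\rho(m)\alpha\in F:=S\cap\rho^\bot$ (iterate condition $(ii)$, lowering $\rho$ by one each step), hence $M=M_F+\ZZ\alpha$; now if $u,v\in S^*$ and $u+v\in\ZZ_{\geq 0}\rho$, then $u(f)=v(f)=0$ for all $f\in F$, so $u=-u(\alpha)\rho$ and $v=-v(\alpha)\rho$, and evaluating at any $m_0\in S$ with $\rho(m_0)>0$ (which exists since $\partial\neq 0$) gives $-u(\alpha),-v(\alpha)\geq0$. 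Thus $\ZZ_{\geq0}\rho$ is a face of $S^*$ isomorphic to $\ZZ_{\geq0}$ with primitive generator $\rho$, i.e.\ $\rho\in S^*(1)$. This is in essence the paper's splitting $M=M_F\oplus\ZZ\alpha$, obtained there via Lemma~\ref{lemma: face and kernel} applied to the extension of $\partial$ to $\KK[S']$; either way, the purely monoid-theoretic route is what makes the theorem valid beyond the finitely generated case.
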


\begin{proof}
Since $\partial$ is  homogeneous of degree $\alpha$, we have that
\begin{align}\label{eq:lambda}
    \partial(\chi^m)=\gamma(m)\chi^{m+\alpha}, \quad\mbox{where}\quad  \gamma\colon S\to \KK\,.
\end{align} 
By the Leibniz rule, we have that $\gamma$ is a semigroup homomorphism. Indeed, 
$$\gamma(m+m')\cdot\chi^{m+m'+\alpha}=\partial(\chi^{m+m'})=\partial(\chi^m\cdot\chi^{m'})=(\gamma(m)+\gamma(m'))\cdot\chi^{m+m'+\alpha}\,.$$

Let $M$ be the group associated to $S$. Since $(\KK,+)$ is a group we can extend $\gamma$ to a group homomorphism $\gamma\colon M\rightarrow \KK$ and in turn we can use $\gamma$ to extend $\partial$ to a derivation in $\widetilde{\partial}\colon\KK[M]\to\KK[M]$ given by
$$\widetilde{\partial}\colon \KK[M]\to \KK[M]\quad\mbox{where}\quad  \chi^m\mapsto \gamma(m)\cdot\chi^{m+\alpha}\,.$$ 

Remark now that $\widetilde{\partial}(\chi^{-\alpha})=-\gamma(\alpha)\chi^0\in\KK$ and so $\widetilde{\partial}^{2}(\chi^{-\alpha})=0$ since $\widetilde{\partial}$ is a $\KK$-derivation. We let now $S'$ be the smallest semigroup in $M$ containing $S$ and $-\alpha$. 

By Lemma~\ref{lemma: face and kernel}, we have that $\ker\partial=\KK[F]$, where $F$ is a face of $S$. Let $m\in S$ and let $n$ be the smallest integer such that $\partial^{n+1}(\chi^m)=0$. Then $\partial^{n}(\chi^{m})$ belongs to $\ker\partial$ and it is homogeneous of degree $m_0\in F$. On the other hand, $m=m_0-n\alpha$. Since $\partial$ is non-trivial, we have $\gamma(\alpha)\neq 0$ and so $-\alpha\notin F$. We conclude that 
\begin{align}\label{eq:split}
    S\subset S'=F \oplus (-\alpha)\ZZ_{\geq0},\quad\mbox{and so}\quad M=M_F\oplus \alpha\ZZ\,,
\end{align}
where $M_F$ is the minimal group where $F$ is embedded. Letting now $\lambda=\gamma(-\alpha)$ we let $\rho=\gamma/\lambda$ so that  $\rho(M_F)=0$ and $\rho(\alpha)=-1$.  We have that $\rho\colon M\to \ZZ$ is an element in $N=\homo(M,\ZZ)$. Furthermore, by \eqref{eq:split} we have $\rho\ZZ_{\geq 0}=S^*\cap F^\bot$ and so $\rho$ is a ray of $S^*$ with $\rho(\alpha)=-1$. This provides condition $(i)$ in Definition~\ref{definition-root} of a Demazure root. To prove condition $(ii)$, remark that $m\in S$ satisfies  $\rho(m)>0$ if and only if $\chi^m\notin\ker\partial$. Hence, $m+\alpha\in S$ since otherwise $\partial$ is not a well defined derivation. This proves that $\alpha$ is a Demazure root of $S$. To concludes the proof remark that $\partial=\lambda\partial_\alpha$ by \eqref{eq:lambda}.
\end{proof}

\begin{remark}\label{Remark:correspondence}
    Locally nilpotent derivations on a $\KK$-algebra $B$ are in one-to-one correspondence with additive group actions on $X=\spec B$. This is most often proved for $B$ a domain as in \cite{freudenburg2006algebraic} but it also holds in our context as proven in \cite[Theorem~4.12]{daigle2003}.
\end{remark}

\begin{remark}
The previous theorem could be combined with the results in \cite{BoGa21} to strengthen the results therein. In particular, it looks like the proof of \cite[Lemma~4]{BoGa21} could be significantly simplified applying our Theorem~\ref{theorem:equivalence}.
\end{remark}

It is well known that the ring of invariants of a locally nilpotent derivation on a finitely generated $\KK$-algebra is not necessarily finitely generated itself, see \cite{roberts1990infinitely,daigle1999counterexample}. In contrast, we have the following corollary.
 
 \begin{corollary}
Let $S$ be a finitely generated cancellative monoid and let $\partial\colon \KK[S]\to \KK[S]$ be a homogeneous locally nilpotent derivation. Then  $\ker\partial$ is a finitely generated $\KK$-algebra.
\end{corollary}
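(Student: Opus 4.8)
The plan is to use the classification from Theorem~\ref{theorem:equivalence} together with the explicit description of the kernel provided by Lemma~\ref{lemma: face and kernel}. Since $\partial$ is a nonzero homogeneous locally nilpotent derivation (the zero case being trivial, as then $\ker\partial=\KK[S]$ is finitely generated because $S$ is finitely generated), Theorem~\ref{theorem:equivalence} tells us that $\partial=\lambda\partial_\alpha$ for some Demazure root $\alpha$ of $S$ with distinguished ray $\rho\in S^*(1)$ and some $\lambda\in\KK^*$. In particular $\ker\partial=\ker\partial_\alpha$, so it suffices to compute the kernel of $\partial_\alpha$ directly.

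By Lemma~\ref{lemma: face and kernel} we have $\ker\partial=\KK[F]$ where $F$ is a face of $S$. The key observation is that $F$ can be identified explicitly: since $\partial_\alpha(\chi^m)=\rho(m)\chi^{m+\alpha}$, a basis element $\chi^m$ lies in the kernel precisely when $\rho(m)=0$. Thus $F=\{m\in S\mid \rho(m)=0\}=S\cap\rho^{\bot}$, the face of $S$ cut out by the supporting ray $\rho$. The plan is therefore reduced to showing that this face $F$ is a finitely generated monoid, so that $\KK[F]$ is a finitely generated $\KK$-algebra.

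The main step is to prove that a face of a finitely generated monoid is again finitely generated. First I would pass to the group level: writing $M=M_S$ and recalling $\rho\in N=\homo(M,\ZZ)$, the face $F=S\cap\ker\rho$ is the intersection of the finitely generated monoid $S$ with the subgroup $\ker\rho\subset M$. I expect this to be the only real obstacle, since it is false for arbitrary submonoids but true for faces. I would handle it by choosing a finite generating set $\{m_1,\dots,m_r\}$ of $S$ and considering an element $m\in F$, written as $m=\sum_i c_i m_i$ with $c_i\in\ZZ_{\geq0}$. Because $\rho(m_i)\geq0$ for all $i$ (as $\rho\in S^*$) and $\rho(m)=0$, any generator $m_i$ with $c_i>0$ must satisfy $\rho(m_i)=0$, i.e.\ $m_i\in F$ by the face property. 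Hence $F$ is generated by the subset $\{m_i\mid \rho(m_i)=0\}$ of the original generators and is therefore finitely generated. Consequently $\KK[F]=\ker\partial$ is a finitely generated $\KK$-algebra, completing the proof.

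Note that, in contrast to the general counterexamples of Roberts and Daigle–Freudenburg cited before the corollary, the homogeneity hypothesis is essential here: it is exactly what forces the kernel to be a monomial subalgebra $\KK[F]$ via Lemma~\ref{lemma: face and kernel}, after which finite generation becomes a purely combinatorial fact about faces. I would emphasize that no finiteness of the root set or of $S^*$ is needed; only that $S$ itself is finitely generated, which guarantees a finite generating set to intersect with the face.
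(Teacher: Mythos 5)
Your proof is correct, but it takes a different route from the paper. The paper's proof is two lines: it invokes Lemma~\ref{lemma: face and kernel} to get $\ker\partial=\KK[F]$ for a face $F$ of $S$, and then simply cites the combinatorial fact that every face of a finitely generated monoid is finitely generated (together with the equivalence between finite generation of $S$ and of $\KK[S]$). You instead bring in the classification result, Theorem~\ref{theorem:equivalence}, to write $\partial=\lambda\partial_\alpha$ and thereby identify the face concretely as $F=S\cap\rho^{\bot}$ for the distinguished ray $\rho\in S^*(1)$; you then prove finite generation of this face by the non-negativity argument: if $m=\sum_i c_i m_i$ with $\rho(m)=0$ and $\rho(m_i)\geq 0$, every generator occurring with $c_i>0$ satisfies $\rho(m_i)=0$. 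What your approach buys is a self-contained verification of the finiteness claim that the paper merely asserts; what it costs is reliance on the heavier Theorem~\ref{theorem:equivalence}, which is not actually needed. Indeed, your generator argument works verbatim for an \emph{arbitrary} face $F$ using only the defining property of faces ($m_1+m_2\in F$ implies $m_1,m_2\in F$): writing $m\in F$ as $m=\sum_i c_i m_i$ exhibits $m$ as a sum of elements of $S$, so each $m_i$ with $c_i>0$ lies in $F$, and hence $F$ is generated by those generators of $S$ that it contains. Replacing your $\rho$-based step with this observation would give exactly the paper's proof, with the asserted combinatorial fact proved rather than cited.
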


\begin{proof}
Recall that a semigroup algebra $\KK[S]$ is finitely generated if and only if $S$ is finitely generated as a semigroup. The Corollary now follows directly from Lemma~\ref{lemma: face and kernel} since every face of a finitely generated semigroup is finitely generated itself.
\end{proof}

\section{Automorphism group of non-necessarily normal affine toric surface}\label{section 4}

Our original motivation to study cancellative monoids is to study automorphism groups of non-necessarily normal affine toric varieties. Our main result in this paper is that for every normal affine toric surface $X_\sigma$ different from $\GM^2$, $\GM\times \AF^1$ and $\AF^2$, there exists a non-normal affine toric surface $X_S$ whose normalization is $X_\sigma$ such that $\Aut(X_\sigma)\simeq \Aut(X_S)$.

We begin by showing that both our definitions of Demazure roots coincide when $S$ is a saturated affine monoid, i.e., the monoid associated to a normal affine toric variety. This fact also follows indirectly from \cite[Theorem 2.7]{liendo2010affine} and Theorem~\ref{theorem:equivalence} since these theorems show, in particular, that with both definitions, the Demazure roots corresponds to the weights of homogeneous locally nilpotent derivations on $\KK[S]$. Nevertheless, we provide a direct proof of this fact. 

Let $S$ be an affine monoid. Recall that $M=M_S$ is the smallest group where $S$ can be embedded and $N=N_S=\homo(M,\ZZ)$. The cone associated to $S$ is $\sigma=\{u\in N_\QQ\mid u(m)\geq 0 \mbox{ for all } m\in S\}$. 

\begin{lemma} \label{def-agree}
Let $S$ an affine monoid. If $S$ is saturated then  $\mathcal{R}(S)=\mathcal{R}(\sigma)$.
\end{lemma}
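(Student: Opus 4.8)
The plan is to show the two definitions of Demazure root agree by unwinding the combinatorial content of each and matching them via the cone-ray dictionary for saturated affine monoids. Recall that for a saturated affine monoid $S$, the associated cone is $\sigma\subset N_\QQ$ and $S=\sigma^\vee\cap M$. The key structural fact I would use is that the rays $\sigma(1)$ of the cone $\sigma$ correspond exactly to the primitive generators of the rays of the dual monoid $S^*$, that is, $S^*(1)=\sigma(1)$, since $S^*=\{u\in N\mid u(m)\geq 0\text{ for all }m\in S\}=\sigma\cap N$ when $S$ is saturated. This identification is what makes condition $(i)$ in Definition~\ref{definition-root} literally the same as the condition $\rho(\alpha)=-1$ with $\rho\in\sigma(1)$ appearing in Definition~\ref{defi:root}.

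First I would prove the inclusion $\mathcal{R}(S)\subseteq\mathcal{R}(\sigma)$. Let $\alpha\in\mathcal{R}(S)$ with distinguished ray $\rho\in S^*(1)=\sigma(1)$, so $\rho(\alpha)=-1$. I must verify the condition $\rho'(\alpha)\geq 0$ for every other ray $\rho'\in\sigma(1)\setminus\{\rho\}$. The idea is to find, for each such $\rho'$, an element $m\in S$ with $\rho(m)>0$ and $\rho'(m)=0$; then Definition~\ref{definition-root}$(ii)$ forces $m+\alpha\in S=\sigma^\vee\cap M$, whence $\rho'(m+\alpha)\geq 0$, and since $\rho'(m)=0$ we get $\rho'(\alpha)\geq 0$. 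The existence of such an $m$ is a standard cone-geometry fact: the relative interior of the facet $\sigma\cap H_\rho$ (cut out by $\rho$... more precisely, I want $m\in\sigma^\vee$ lying in the interior of the facet dual to $\rho'$ but with $\rho(m)>0$) can be chosen in $M$ because $M$ is a lattice of full rank in $M_\QQ$ and rational points are dense. I would phrase this as: choose $m\in\sigma^\vee\cap M$ in the relative interior of the facet $H_{\rho'}\cap\sigma^\vee$ so that $\rho'(m)=0$ while $\rho''(m)>0$ for all rays $\rho''\neq\rho'$, in particular $\rho(m)>0$.

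For the reverse inclusion $\mathcal{R}(\sigma)\subseteq\mathcal{R}(S)$, let $\alpha\in\mathcal{R}(\sigma)$ with distinguished ray $\rho$, so $\rho(\alpha)=-1$ and $\rho'(\alpha)\geq 0$ for all $\rho'\in\sigma(1)\setminus\{\rho\}$. Condition $(i)$ of Definition~\ref{definition-root} holds by the identification $S^*(1)=\sigma(1)$. For condition $(ii)$, take any $m\in S$ with $\rho(m)>0$; I must show $m+\alpha\in S=\sigma^\vee\cap M$. Since $m,\alpha\in M$ we have $m+\alpha\in M$, so it suffices to check $\rho''(m+\alpha)\geq 0$ for every $\rho''\in\sigma(1)$. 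For $\rho''=\rho$: $\rho(m+\alpha)=\rho(m)-1\geq 0$ because $\rho(m)>0$ is an integer, hence $\geq 1$. For $\rho''\neq\rho$: $\rho''(m+\alpha)=\rho''(m)+\rho''(\alpha)\geq 0+0=0$, using $m\in S=\sigma^\vee\cap M$ so $\rho''(m)\geq 0$, and $\rho''(\alpha)\geq 0$ from the root condition.

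The main obstacle, and the step deserving the most care, is the first inclusion: making precise the choice of the lattice point $m\in S$ with $\rho(m)>0$ and $\rho'(m)=0$. This requires that saturation of $S$ genuinely guarantees $S=\sigma^\vee\cap M$ (invoked from the excerpt) together with a density/rationality argument that a suitable point of the relative interior of a facet can be taken in $M$. I would lean on the standard facts about rational polyhedral cones as recorded in \cite{cox2011toric}, and note that the integrality $\rho(m)\geq 1$ used in the reverse direction is precisely what makes the strict inequality $\rho(m)>0$ in Definition~\ref{definition-root} equivalent to the weak inequalities in Definition~\ref{defi:root}.
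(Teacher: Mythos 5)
Your proof is correct and takes essentially the same route as the paper's: the same two inclusions, using for $\mathcal{R}(S)\subseteq\mathcal{R}(\sigma)$ an auxiliary lattice point $m\in S$ with $\rho'(m)=0$ and $\rho(m)>0$ fed into condition $(ii)$, and for $\mathcal{R}(\sigma)\subseteq\mathcal{R}(S)$ the ray-by-ray check $\rho(m+\alpha)=\rho(m)-1\geq 0$, $\rho''(m+\alpha)\geq 0$ followed by saturation. The only difference is cosmetic: you spell out the identification $S^*(1)=\sigma(1)$ and the existence of $m$ (relative interior of the dual facet plus density of rational points), which the paper asserts without justification, and you verify condition $(ii)$ for all $m\in S$ where the paper restricts to a generating set.
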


\begin{proof} 
It is enough to verify that $\mathcal{R}_\rho(S)= \mathcal{R}_\rho(\sigma)$ for any fixed ray $\rho\in \sigma(1)$. Letting $\alpha\in \mathcal{R}_\rho(S)$, we let $\rho'\in\sigma(1)\setminus\{\rho\}$ and we fix $m\in ((\rho')^\bot\cap S)\setminus \rho^\bot$. By the definition of Demazure root, we have $\alpha+m\in S$ so that $\rho'(\alpha)=\rho'(\alpha+m)\geq 0$. Hence, $\alpha\in \mathcal{R}_\rho(\sigma)$ and we have  $\mathcal{R}_\rho(S)\subset \mathcal{R}_\rho(\sigma)$. 

As for the other inclusion, let $\alpha\in\mathcal{R}_\rho(\sigma)$ and let $\rho'\in\sigma(1)\setminus\{\rho\}$. Letting $A$ be a generating set of $S$, we let $m\in A$ with $\rho(m)>0$. Then  $\rho'(m+\alpha)\geq 0$ for all $\rho'\in \sigma(1)$ so $m+\alpha\in \sigma^\vee\cap M=S$ by saturation. This yields $\alpha\in \mathcal{R}_\rho(S)$.
\end{proof}

If $S$ is not saturated, we only have the inclusion $\mathcal{R}(S)\subseteq \mathcal{R}(\sigma)$ and typically this inclusion is proper. For instance, if $X_\sigma$ is the affine space, then the equality never holds by \cite{cantat2019families}. Nevertheless, in Proposition~\ref{roots: non normal}, we will show that for every normal affine toric variety $X_{\sigma}$ without torus factor that is not a product $X_{\sigma'}\times \AF^1$, we can always find a non-normal affine toric variety $X_S$ such that $X_{\sigma}$ is the normalization of $X_S$ and $\mathcal{R}(S)=\mathcal{R}(\sigma)$. Before we do so, we need to prove the following lemma.

\begin{lemma} \label{neg-root-affine}
Let $\sigma$ be a strongly convex cone. Let   $\alpha\in\mathcal{R}$ be a Demazure root. If  $-\alpha\in (\sigma^\vee\cap M)$ then $X_{\sigma}\simeq X_{\sigma'}\times \AF^1$ with $X_{\sigma'}$ an affine toric variety
\end{lemma}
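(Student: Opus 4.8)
The plan is to extract the precise combinatorial consequence of the hypothesis $-\alpha\in\sigma^\vee\cap M$ and then use $-\alpha$ itself as the linear functional that splits off the $\AF^1$-factor. First I would observe that $-\alpha\in\sigma^\vee$ means $\rho'(\alpha)\leq 0$ for every ray $\rho'\in\sigma(1)$. On the other hand, since $\alpha$ is a Demazure root with distinguished ray $\rho$, Definition~\ref{defi:root} already gives $\rho(\alpha)=-1$ and $\rho'(\alpha)\geq 0$ for all $\rho'\neq\rho$. Combining the two sets of inequalities forces $\rho'(\alpha)=0$ for every $\rho'\in\sigma(1)\setminus\{\rho\}$, while $\rho(\alpha)=-1$ on the distinguished ray.

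Next I would regard $u_0:=-\alpha\in M$ as a linear functional on $N$. Because $u_0(\rho)=1$, the map $v\mapsto u_0(v)$ is a surjection $N\to\mathbb{Z}$ whose kernel $N_0$ is a saturated sublattice; since $\rho\mapsto 1$, this splits the sequence and yields $N=N_0\oplus\mathbb{Z}\rho$ as lattices. The computation above shows that every ray other than $\rho$ lies in $u_0^\bot$, hence in $(N_0)_\QQ$, whereas $\rho$ spans the complementary line $\QQ\rho$. As $\sigma$ is strongly convex it is generated by its rays, so $\sigma=\mathbb{Q}_{\geq 0}\rho+\tau$, where $\tau:=\sigma\cap u_0^\bot$ is the face of $\sigma$ collecting all the remaining rays.

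The key point is then that, relative to the splitting $N_\QQ=(N_0)_\QQ\oplus\QQ\rho$, this sum is an honest direct product of cones: $\tau$ lives in the first summand and $\mathbb{Q}_{\geq 0}\rho$ in the second, so $\sigma\simeq\tau\times\mathbb{Q}_{\geq 0}\rho$. Since products of cones in a lattice splitting correspond to products of affine toric varieties, and $\KK[(\mathbb{Q}_{\geq 0}\rho)^\vee\cap\mathbb{Z}\rho]=\KK[\mathbb{Z}_{\geq 0}]=\KK[t]$, I would conclude $X_\sigma\simeq X_\tau\times\AF^1$, taking $\sigma'=\tau$ as the cone in $N_0$.

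I expect the main delicate step to be justifying that $\mathbb{Q}_{\geq 0}\rho+\tau$ is a genuine product decomposition \emph{at the level of lattices}, and not merely a decomposition of convex cones, so that the semigroup algebra actually factors as a tensor product over $\KK$. This is exactly where the surjectivity of $u_0$ and the primitivity of $\rho$ are used: together they guarantee $N=N_0\oplus\mathbb{Z}\rho$ integrally rather than only rationally, which is what makes the $\AF^1$-factor appear rather than some finite quotient thereof.
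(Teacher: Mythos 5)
Your proof is correct and follows essentially the same route as the paper's: both combine $-\alpha\in\sigma^\vee$ with the root inequalities to get $\rho'(\alpha)=0$ for $\rho'\neq\rho$ and $\rho(\alpha)=-1$, split $N=\alpha^\bot\oplus\ZZ\rho$ using primitivity of the functional, and identify $\sigma$ as the product of $\QQ_{\geq 0}\rho$ with the cone spanned by the remaining rays. Your explicit justification of the integral (not merely rational) lattice splitting via the section $1\mapsto\rho$ is a welcome elaboration of a step the paper passes over with the phrase ``since $\alpha$ is primitive.''
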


\begin{proof}
Let $\rho$ be the distinguished ray of $\alpha$. Since $-\alpha\in (\sigma^\vee\cap M)$, we have \begin{align} \label{roots-regular}
    \rho'(-\alpha)=\begin{cases}
    1 & \mbox{if }\rho'=\rho \\
    0 & \mbox{if }\rho'\neq\rho
    \end{cases}
\end{align}
Let $N'=\alpha^\bot$ be the orthogonal to the Demazure root $\alpha$. Since $\alpha$ is primitive, we can split $N=N'\oplus \rho\ZZ$. Let now $\sigma'=\cone(\rho'\mid \rho'\neq \rho)$. By \eqref{roots-regular}, we have $\sigma'\subset N'_\QQ$. Now, the cone generated by $\sigma'$ and $\rho$ equals $\sigma$ and moreover, the splitting  $N=N'\oplus \rho\ZZ$ yields $X_\sigma=X_{\sigma'}\times \AF^1$.
\end{proof}

Let $\sigma$ be a full-dimensional strongly convex polyhedral cone $\sigma \in N_\QQ$.  Recall that $\sigma$ is full-dimensional if and only if $X_\sigma$ is non-degenerate, i.e., if $X_\sigma$ does not have a torus factor. Let also $\mathcal{H}$ be the Hilbert basis of $S=\sigma^\vee\cap M$, i.e., the set of element in $S$ that are irreducible in the sense that they cannot be expressed as sum of two non-trivial elements in $S$. Recall that $\mathcal{H}$ is a generating set of the semigroup $\sigma^\vee\cap M$.

\begin{proposition}\label{roots: non normal}
Let $X_\sigma$ be a non-degenerated affine toric variety. If  $S=(\sigma^{\vee}\cap M)\setminus\mathcal{H}$, then $\mathcal{R}(\sigma)= \mathcal{R}(S)$ if and only if $X_\sigma$ cannot be decomposed as a product $X_{\sigma'}\times \AF^1$ with $X_{\sigma'}$ an affine toric variety.
\end{proposition}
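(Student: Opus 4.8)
The plan is to reduce the equivalence to the behaviour of a single Demazure root of $\sigma$ under the removal of the Hilbert basis, and then to recognise the resulting obstruction as exactly the splitting criterion of Lemma~\ref{neg-root-affine}. First I would record the basic structure of $S=(\sigma^\vee\cap M)\setminus\mathcal{H}$: every nonzero element of $\sigma^\vee\cap M$ lying outside $\mathcal{H}$ is a sum of at least two Hilbert basis elements, so $S$ is closed under addition and contains $0$, hence is a cancellative submonoid of $M$. For each $h\in\mathcal{H}$ one has $2h=h+h\in S$, so $S^{sat}=\sigma^\vee\cap M$; and since $S$ and $\sigma^\vee\cap M$ generate the same rational cone, $S^*=\sigma\cap N$ and therefore $S^*(1)=\sigma(1)$. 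Combining Proposition~\ref{inclusion} with Lemma~\ref{def-agree} applied to the saturated affine monoid $S^{sat}=\sigma^\vee\cap M$ gives $\mathcal{R}(S)\subseteq\mathcal{R}(\sigma)$ with no hypothesis on $X_\sigma$; thus the entire content of the statement is the reverse inclusion.

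The heart of the argument is the following claim: if $\alpha\in\mathcal{R}_\rho(\sigma)$ and $m\in S$ satisfies $\rho(m)>0$ and $m+\alpha\in\mathcal{H}$, then $-\alpha\in\sigma^\vee\cap M$. To prove it I would write $m=h_1+\dots+h_k$ as a sum of Hilbert basis elements with $k\geq 2$ (possible since $m\neq 0$ and $m\notin\mathcal{H}$) and, using $\rho(m)=\sum_i\rho(h_i)>0$, reorder so that $\rho(h_1)\geq 1$. Then $h_1+\alpha\in\sigma^\vee\cap M$, because $\rho(h_1+\alpha)=\rho(h_1)-1\geq 0$ while $\rho'(h_1+\alpha)=\rho'(h_1)+\rho'(\alpha)\geq 0$ for every other ray $\rho'$. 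Consequently $m+\alpha=(h_1+\alpha)+(h_2+\dots+h_k)$ exhibits $m+\alpha$ as a sum of two elements of $\sigma^\vee\cap M$; since $\sigma^\vee$ is strongly convex (the non-degeneracy hypothesis) the second summand is nonzero, so the irreducibility of $m+\alpha\in\mathcal{H}$ forces $h_1+\alpha=0$, that is $-\alpha=h_1\in\sigma^\vee\cap M$. Granting the claim, suppose $X_\sigma$ is \emph{not} a product; by the contrapositive of Lemma~\ref{neg-root-affine} no root $\alpha$ of $\sigma$ has $-\alpha\in\sigma^\vee\cap M$, so for a root $\alpha\in\mathcal{R}_\rho(\sigma)$ no $m\in S$ with $\rho(m)>0$ can satisfy $m+\alpha\in\mathcal{H}$. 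Since $m+\alpha$ always lies in $\sigma^\vee\cap M$ when $\alpha$ is a root of $\sigma$ and $\rho(m)>0$, this gives $m+\alpha\in S$, verifying condition $(ii)$ of Definition~\ref{definition-root}; condition $(i)$ holds as $\rho\in S^*(1)=\sigma(1)$ and $\rho(\alpha)=-1$. Hence $\mathcal{R}(\sigma)\subseteq\mathcal{R}(S)$, and equality follows.

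For the converse I would assume $X_\sigma\simeq X_{\sigma'}\times\AF^1$ and directly produce a root of $\sigma$ that is lost in $S$. Writing the product as $N=N'\oplus\ZZ\rho$ and $M=M'\oplus\ZZ$, the Hilbert generator $h_0\in\mathcal{H}$ of the $\AF^1$-factor satisfies $\rho(h_0)=1$ and $\rho'(h_0)=0$ for all remaining rays, so $\alpha:=-h_0$ lies in $\mathcal{R}_\rho(\sigma)$. Taking $m:=2h_0\in S$ one has $\rho(m)=2>0$ while $m+\alpha=h_0\in\mathcal{H}$, whence $m+\alpha\notin S$ and $\alpha\notin\mathcal{R}(S)$; therefore $\mathcal{R}(S)\subsetneq\mathcal{R}(\sigma)$.

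The step I expect to be the main obstacle is the key claim of the second paragraph, specifically justifying that once $m+\alpha$ is rewritten as $(h_1+\alpha)+(h_2+\dots+h_k)$ both summands genuinely lie in the saturated monoid $\sigma^\vee\cap M$ and the remainder is nonzero. This is precisely where strong convexity of $\sigma$ and the defining inequalities $\rho(\alpha)=-1$, $\rho'(\alpha)\geq 0$ of a Demazure root interact, and it is the point at which the failure of a root to survive in $S$ is forced into the splitting $-\alpha\in\sigma^\vee\cap M$ detected by Lemma~\ref{neg-root-affine}.
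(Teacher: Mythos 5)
Your proof is correct and follows essentially the same route as the paper's: both arguments decompose $m$ (respectively a generator $a$) into Hilbert basis elements, use the root inequalities to place $h_1+\alpha$ in $\sigma^\vee\cap M$, let irreducibility of $m+\alpha\in\mathcal{H}$ force $h_1+\alpha=0$, and then invoke Lemma~\ref{neg-root-affine} to contradict the no-product hypothesis, while the converse uses the same lost root $\alpha=-h_0$ witnessed by $m=2h_0$. The only differences are cosmetic: you phrase the core step as a standalone claim applied in contrapositive to all $m\in S$ rather than as a contradiction over a generating set, and you make explicit the pointedness of $\sigma^\vee$ that the paper uses implicitly.
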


\begin{proof} 
If $X_\sigma=X_{\sigma'}\times \AF^1$, then $M=M'\times \ZZ$, $N=N'\times \ZZ$ and $\sigma$ is the cone generated by $(0,1)$ and $(\sigma',0)$, where $\sigma'$ is a cone in $N'_\QQ$ corresponding to the affine toric variety  $X_{\sigma'}$. In this case, a straightforward computation shows that $\alpha=(0,-1)\in M$ is a Demazure root of $\sigma$ with respect to the ray $\rho=(0,1)\in N$. Furthermore,  $(0,1)\in M$ belongs to $\mathcal{H}$. By Definition~\ref{definition-root}, we have that $\alpha$ is not a Demazure root of $S=(\sigma^\vee\cap M)\setminus \mathcal{H}$ since $m=(0,2)\in S$, $\rho(m)=2>0$ and $m+\alpha\notin S$. This yields $\mathcal{R}(\sigma)\neq \mathcal{R}(S)$, proving one direction of the proposition.

To prove the other direction, assume that $X_\sigma$ cannot be decomposed as a product $X_{\sigma'}\times \AF^1$. Let $A$ be a generating set of $S$. Let $\rho\in \sigma(1)$. By Proposition~\ref{inclusion}, we have $\mathcal{R}_{\rho}(S)\subset \mathcal{R}_{\rho}(\sigma)$, so we only need to verify the converse inclusion $\mathcal{R}_{\rho}(\sigma)\subset \mathcal{R}_{\rho}(S)$. A Demazure root $\alpha\in \mathcal{R}_{\rho}(\sigma)$ is contained in $\mathcal{R}_\rho(S)$ if and only if $\alpha+a\in S$ for all $a\in A$ such that $\rho(a)>0$.  Nevertheless,  by Definition~\ref{definition-root} and Lemma~\ref{def-agree} we have $\alpha+a\in \sigma^\vee\cap M$. Hence, $\alpha\in \mathcal{R}_{\rho}(S)$ if and only if $\alpha+a \notin \mathcal{H}$. Assume $\alpha+a\in \mathcal{H}$. We have that $a\notin \mathcal{H}$, hence $a=h_1+\ldots+h_\ell$ with $h_i\in \mathcal{H}$ and $\ell\geq 2$. Since $\rho(a)>0$ we have $\rho(h_i)>0$ for some $i$. Without loss of generality, we may and will assume $\rho(h_1)>0$. Hence $a=h_1+a'$ with $a'\in (\sigma^\vee\cap M)\setminus \{0\}$. But now $\alpha+a=\alpha+h_1+a'$. Again by definition of Demazure root we have $\alpha+h_1\in \sigma^\vee\cap M$. Moreover, if $\alpha+h_1= 0$ the $-\alpha\in \sigma^\vee\cap M$ and by Lemma~\ref{neg-root-affine}, we have that $X_\sigma$ can be decomposed as a product $X_{\sigma'}\times \AF^1$, which is a contradiction. Hence, we may and will assume $\alpha+h_1\neq 0$. This yields  $\alpha+a=(\alpha+h_1)+a'$ with $(\alpha+h_1),a'\in (\sigma^\vee\cap M)\setminus \{0\}$ and so  $\alpha+a\notin\mathcal{H}$ since it is not irreducible proving the proposition.
\end{proof}

Before stating our main theorem in this section, we need the following lemma.

\begin{lemma} \label{lifting-to-normalization}
Let $X$ and $Y$ be algebraic varieties and assume that $X$ is the normalization of $Y$. Then every automorphism of $Y$ lifts to an automorphism of $X$. Moreover, we have $\Aut(Y)\subseteq\Aut(X)$ with equality if and only if every automorphism of $X$ is the lifting of an automorphism of $Y$.      
\end{lemma}

\begin{proof}
    The first statement follows directly from the Universal Property of Normalization. Remark now that the normalization map $X\to Y$ is a birational isomorphism and under this isomorphism, both groups $\Aut(X)$ and $\Aut(Y)$ are naturally subgroups of the group of birational maps of $X$. Now the second statement follows from the first one.  
\end{proof}

Our main interest in this section is affine toric surfaces. For these surfaces, we will prove the following theorem.

\begin{theorem} \label{main-surface}
Let $X_\sigma$ be a normal affine toric surface. Then, there exists a non-normal affine toric surface $X_S$ whose normalization is $X_\sigma$ with $\Aut(X_\sigma)=\Aut(X_S)$ if and only if $X_\sigma$ is different from $\GM^2$, $\GM\times \AF^1$ and $\AF^2$.
\end{theorem}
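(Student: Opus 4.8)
The plan is to realize both automorphism groups as subgroups of one common group and then reduce the asserted equality to the equality of Demazure roots already furnished by Corollary~\ref{cor-roots}. Since the normalization morphism $\eta\colon X_\sigma\to X_S$ is birational, the coordinate rings $\KK[S]\subset\KK[\sigma^\vee\cap M]$ share the fraction field $\KK(M)=\fract(\KK[M])$, so every automorphism of either surface induces a $\KK$-automorphism of $\KK(M)$, and $\Aut(X_S)$, $\Aut(X_\sigma)$ both sit inside $\Aut_\KK\bigl(\KK(M)\bigr)$. I would first record that $\KK[\sigma^\vee\cap M]$ is the integral closure of $\KK[S]$ in $\KK(M)$; as ring automorphisms preserve integral closures, every $g\in\Aut(X_S)$ also preserves $\KK[\sigma^\vee\cap M]$. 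This yields a canonical inclusion $\Aut(X_S)\subseteq\Aut(X_\sigma)$ (equivalently, each automorphism of $X_S$ lifts uniquely to its normalization), and the theorem is precisely the assertion that this inclusion is an equality exactly in the stated cases.

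For the ``if'' direction I assume $X_\sigma$ is none of $\GM^2$, $\GM\times\AF^1$, $\AF^2$ and take the non-normal $X_S$ produced by Corollary~\ref{cor-roots}, so that $\mathcal{R}(S)=\mathcal{R}(\sigma)$. Both surfaces carry the same torus $T=\spec\KK[M]$, which manifestly preserves $\KK[S]$. For each $\alpha\in\mathcal{R}(\sigma)=\mathcal{R}(S)$ the derivation $\partial_\alpha$ restricts to a locally nilpotent derivation of $\KK[S]$ (this is exactly what $\alpha\in\mathcal{R}(S)$ guarantees, by Section~\ref{section 3}), so the root subgroup $U_\alpha=\exp(t\partial_\alpha)$ preserves $\KK[S]$. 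Because $X_\sigma$ has no torus factor (here I use that it is neither $\GM^2$ nor $\GM\times\AF^1$), Lemma~\ref{AZ-aut-toric-surface} gives $\Aut(X_\sigma)=\langle T,\,U_\alpha:\alpha\in\mathcal{R}(\sigma)\rangle$. Since all these generators and their inverses preserve $\KK[S]$, so does every element of $\Aut(X_\sigma)$; hence $\Aut(X_\sigma)\subseteq\Aut(X_S)$, and equality follows.

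For the ``only if'' direction I would dispose of the three excluded cases in turn. If $X_\sigma=\GM^2$ then $\sigma=\{0\}$ and $\cone(S)=M_\QQ$, whence $-m\in S$ for every $m\in S$ (write $-Nm\in S$ and add $(N-1)m$), so $S$ is a group, $S=M$, and there is no non-normal $X_S$ with this normalization: the existence statement fails vacuously. If $X_\sigma=\GM\times\AF^1$, every non-normal $X_S$ with this normalization splits as $\GM\times C$ with $C$ a non-normal affine toric curve, so $\mathcal{R}(S)=\emptyset$ while $\mathcal{R}(\sigma)\neq\emptyset$; then the root subgroup $U_\alpha\subseteq\Aut(X_\sigma)$ attached to any $\alpha\in\mathcal{R}(\sigma)$ fails to preserve $\KK[S]$ (otherwise differentiating $\exp(t\partial_\alpha)$ at $t=0$ would place $\alpha$ in $\mathcal{R}(S)$ via Theorem~\ref{theorem:equivalence}), so $\Aut(X_S)\subsetneq\Aut(X_\sigma)$. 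Finally, the case $X_\sigma=\AF^2$ is exactly \cite[Proposition~6.2]{liendo2018characterization}, which I would invoke directly.

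The main obstacle is the passage from ``the roots agree'' to ``the automorphism groups agree'', and this is carried entirely by Lemma~\ref{AZ-aut-toric-surface}: without the Arzhantsev--Zaidenberg generation statement one controls only the torus and the root subgroups, not arbitrary automorphisms. The delicate point in applying it is checking its torus-factor hypothesis, which is precisely why $\GM^2$ and $\GM\times\AF^1$ must be removed; the exclusion of $\AF^2$ is of a different nature, since there the root-matching of Corollary~\ref{cor-roots} is simply unavailable, so that case is relegated to the cited proposition rather than handled by this mechanism.
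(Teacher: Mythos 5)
Your proposal is correct under the reading that ``$\Aut(X_\sigma)=\Aut(X_S)$'' means equality of subgroups via the canonical lifting, and it follows the paper's strategy almost exactly: the same four-way case split, the generic case obtained from Corollary~\ref{cor-roots} plus Lemma~\ref{AZ-aut-toric-surface} applied to $S=(\sigma^\vee\cap M)\setminus\mathcal{H}$, and the $\AF^2$ case delegated to \cite[Proposition~6.2]{liendo2018characterization}. Your explicit mechanism for the generic case --- the inclusion $\Aut(X_S)\subseteq\Aut(X_\sigma)$ via integral closure, together with the observation that the torus and the root subgroups preserve $\KK[S]$ when $\mathcal{R}(S)=\mathcal{R}(\sigma)$ --- is exactly the paper's proof of the second assertion of Lemma~\ref{AZ-aut-toric-surface}, so that is not a real divergence; and your argument that $\GM^2$ admits no non-normal toric surface as normalization supplies a step the paper merely asserts. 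The one genuine difference is the case $X_\sigma=\GM\times\AF^1$: the paper computes both groups explicitly ($\ZZ/2\ZZ\times(\GA\rtimes\GM)$ versus $\ZZ/2\ZZ\times\GM$) and concludes they are not isomorphic as abstract groups, whereas you show only that the canonical inclusion $\Aut(X_S)\subseteq\Aut(X_\sigma)$ is proper. Under the subgroup-equality reading (which is the one Lemma~\ref{AZ-aut-toric-surface} itself uses) this suffices; but the introduction phrases the main result in terms of abstract isomorphism, and a group can be isomorphic to a proper subgroup of itself, so under that stronger reading your argument leaves a gap for this case which the paper's abelian-versus-nonabelian comparison closes. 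Separately, note that both you and the paper rest this case on the unproved assertion that every non-normal toric surface with normalization $\GM\times\AF^1$ splits as $\GM\times C$ with $C$ a toric curve; this assertion is in fact false --- for $S=(2\ZZ\times\{0\})\cup(\ZZ\times\ZZ_{\geq 1})$ one has $\KK[S]\simeq\KK[u^{\pm 1},v,w]/(w^2-uv^2)$, a non-normal toric surface with normalization $\GM\times\AF^1$ whose singular locus has two analytic branches interchanged by monodromy, hence is not a product --- although the conclusion you actually need, $\mathcal{R}(S)\subsetneq\mathcal{R}(\sigma)$, can be recovered directly: if every $(a,-1)$ were a root of $S$, repeated use of condition~$(ii)$ of Definition~\ref{definition-root} would force $S=\ZZ\times\ZZ_{\geq 0}$, contradicting non-normality.
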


We need some preliminary results first.

\begin{lemma} \label{no-GA}
Let $X_S$ be an non-normal affine toric surface related with the affine monoid $S$. If the normalization of $X_S$ is isomorphic to  $\GM\times \AF^1$, then $S$ has not Demazure root.
\end{lemma}

\begin{proof}
The affine monoid $S'$ corresponding to the normalization $\GM\times \AF^1$ of $X_S$ is $S'=\ZZ\times \ZZ_{\geq 0}$. The set of Demazure roots of $S'$ is  $\mathcal{R}(S')=\{(i,-1)\in M \mid i\in\mathbb{Z}\}$. Since $S$ is not saturated, by \cite[Exercice 7.15]{MiSt05}  (see also \cite[Lemma 2]{BoGa21}), we can choose  $(m_1,m_2)\in S'\setminus S$ such that $(j,m_2+1) \in S$ for all $j\in \ZZ$. By Proposition~\ref{inclusion}, we have $\mathcal{R}(S)\subset\mathcal{R}(S')$. Take now $\alpha=(i,-1)\in \mathcal{R}(S')$ be any root of $S'$. Now the element $m=(m_1-i,m_2+1)$ is in $S$ but $m+\alpha$ is not in $S$. This yields $\mathcal{R}(S)=\emptyset$.
\end{proof}

\begin{remark} \label{torus-no-nonnormal}
    If $X_\sigma$ is the normalization of $X_S$ and $X_\sigma$ is isomorphic to the algebraic torus, then $X_S=X_\sigma$. Indeed, in this case $X_\sigma=\spec \KK[M]$ with $M\simeq \ZZ^2$ which is a group. Assume that $S$ is a monoid whose saturation is $M$. By \cite[Exercise~7.15]{MiSt05}, we have $S=M$.
\end{remark}

\begin{lemma} \label{cor-roots}
Let $X_\sigma$ be a normal affine toric surface. Then, there exists a non-normal affine toric surface $X_S$ whose normalization is $X_\sigma$ with $\mathcal{R}(\sigma)=\mathcal{R}(S)$ if and only if $X_\sigma$ is different from $\GM^2$, $\GM\times \AF^1$ and $\AF^2$.
\end{lemma}

\begin{proof}
Let $\sigma\subset N_\QQ\simeq \QQ^2$ be a strongly convex polyhedral cone. If $X_\sigma$ is different from $\GM^2$, $\GM\times \AF^1$ and $\AF^2$, then the lemma follows from Proposition~\ref{roots: non normal}  taking $S=(\sigma^\vee\cap M)\setminus \mathcal{H}$. If $X_\sigma=\GM^2$, then by Remark~\ref{torus-no-nonnormal}, there are no non-normal affine toric varieties with normalization $X_\sigma$ so the lemma holds trivially. If $X_\sigma=\GM\times \AF^1$, then $X_\sigma$ admits a $\GA$-action by translation in the $\AF^1$ factor, so $\mathcal{R}(\sigma)\neq \emptyset$ while $\mathcal{R}(S)=\emptyset$ by Lemma~\ref{no-GA}. Finally, if $X_\sigma=\AF^2$ the lemma follows from \cite[Proposition~6.2]{liendo2018characterization}.
\end{proof}

To achieve the proof of Theorem~\ref{main-surface}, we need the following lemma borrowed from \cite{A13}.

\begin{lemma} \label{AZ-aut-toric-surface}
The automorphism group of a non-degenerated normal affine toric surface $X_\sigma$ is generated by the torus and the images in $\Aut(X_\sigma)$ of all the $\GA$-actions corresponding to roots $\alpha\in \mathcal{R}(\sigma)$. In particular, if $X_S$ is a non-normal affine toric surface $X_S$ whose normalization is $X_\sigma$ with $X_\sigma$ non-degenerated and $\mathcal{R}(\sigma)=\mathcal{R}(S)$, then $\Aut(X_\sigma)=\Aut(X_S)$.
\end{lemma}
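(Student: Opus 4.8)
The first assertion is the Arzhantsev--Zaidenberg description of the automorphism group of a normal affine toric surface, and I would simply invoke \cite{A13}: the group $\Aut(X_\sigma)$ is generated by the image of the torus $T$ together with the images of the one-parameter unipotent subgroups $U_\alpha=\{\exp(t\partial_\alpha)\mid t\in\KK\}$ attached to the Demazure roots $\alpha\in\mathcal{R}(\sigma)$. The work lies in the ``in particular'' statement. The key structural observation is that $X_S$ and $X_\sigma$ share the same acting torus $T=\spec\KK[M]$, and that the normalization $\eta\colon X_\sigma\to X_S$ corresponds to the inclusion of graded algebras $\KK[S]\hookrightarrow\KK[\sigma^\vee\cap M]$, which is integral and birational, both rings having fraction field $\fract(\KK[M])$. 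My plan is to produce a restriction isomorphism $r\colon\Aut(X_\sigma)\to\Aut(X_S)$ by showing that every automorphism of $X_\sigma$ stabilizes the subalgebra $\KK[S]$.

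To show stability of $\KK[S]$, it suffices by the Arzhantsev--Zaidenberg result to check it on the generators of $\Aut(X_\sigma)$. The torus $T$ acts diagonally with respect to the $M$-grading, so it preserves each line $\KK\cdot\chi^m$; since $\KK[S]=\bigoplus_{m\in S}\KK\cdot\chi^m$ is a sum of such lines, $T$ stabilizes $\KK[S]$. For a root $\alpha\in\mathcal{R}(\sigma)$, the hypothesis $\mathcal{R}(\sigma)=\mathcal{R}(S)$ gives $\alpha\in\mathcal{R}(S)$, so by Definition~\ref{definition-root} and Lemma~\ref{homogeneous derivation} the derivation $\partial_\alpha$ restricts to a locally nilpotent derivation of $\KK[S]$; concretely $\partial_\alpha(\chi^m)=\rho(m)\chi^{m+\alpha}$ lands in $\KK[S]$ because $m+\alpha\in S$ whenever $\rho(m)>0$. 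Exponentiating, $\exp(t\partial_\alpha)$ maps $\KK[S]$ into itself for every $t$, so each $U_\alpha$ stabilizes $\KK[S]$. Hence every element of $\Aut(X_\sigma)$ stabilizes $\KK[S]$, and restriction defines a group homomorphism $r\colon\Aut(X_\sigma)\to\Aut(X_S)$.

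It remains to see that $r$ is bijective. For injectivity, if $\psi\in\Aut(X_\sigma)$ restricts to the identity on $\KK[S]$, then the induced automorphism of $\fract(\KK[S])=\fract(\KK[\sigma^\vee\cap M])$ is the identity, whence $\psi^*=\mathrm{id}$ on $\KK[\sigma^\vee\cap M]$ and $\psi=\mathrm{id}$. For surjectivity, given $\phi\in\Aut(X_S)$ the dominant morphism $\phi\circ\eta\colon X_\sigma\to X_S$ lifts, by the universal property of normalization, to a unique $\tilde\phi\colon X_\sigma\to X_\sigma$ with $\eta\circ\tilde\phi=\phi\circ\eta$; applying the same to $\phi^{-1}$ shows $\tilde\phi\in\Aut(X_\sigma)$, and by construction $r(\tilde\phi)=\phi$. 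Therefore $r$ is an isomorphism and $\Aut(X_\sigma)=\Aut(X_S)$.

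The main obstacle, and the only place the normalizing hypothesis $\mathcal{R}(\sigma)=\mathcal{R}(S)$ is genuinely used, is the stability of $\KK[S]$ under the root subgroups: without the equality of root sets some $U_\alpha$ could move $\KK[S]$ off itself, as already illustrated by the excluded product case $X_{\sigma'}\times\AF^1$, and then no restriction homomorphism would exist. Everything else, namely the torus stability, injectivity from birationality, and surjectivity from the universal property of normalization, is formal.
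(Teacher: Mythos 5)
Your proposal is correct and follows essentially the same route as the paper: the first assertion is delegated to the Arzhantsev--Zaidenberg results in \cite{A13}, and the equality of automorphism groups combines the universal property of normalization with the fact that, since $\mathcal{R}(\sigma)=\mathcal{R}(S)$, all generators of $\Aut(X_\sigma)$ (torus and root subgroups) preserve $\KK[S]$. The paper states this in one sentence where you spell out the restriction map $r$ and verify injectivity and surjectivity explicitly, but the underlying argument is the same.
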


\begin{proof}
The second assertion follows directly from the first by Lemma~\ref{lifting-to-normalization}. The first assertion follows from \cite[Lemma~4.3]{A13}. Indeed, in their notation, $\Aut(X_\sigma)$ is a quotient of $\mathcal{N}_{d,e}$ and by \cite[Proposition~4.4 and Lemma~4.5]{A13} $\mathcal{N}_{d,e}$ consists of de Jonqui\`eres transformations and de Jonqui\`eres transformations consist precisely of compositions of the  images in $\Aut(X_\sigma)$ of the acting torus and of all the $\GA$-actions corresponding to roots $\alpha\in \mathcal{R}(\sigma)$.
\end{proof}

We can now prove our main result in Theorem~\ref{main-surface}.

\begin{proof}[Proof of Theorem~\ref{main-surface}]

If $X_\sigma$ is different from $\GM^2$, $\GM\times \AF^1$ and $\AF^2$, the theorem follows from Lemma~\ref{cor-roots} and Lemma~\ref{AZ-aut-toric-surface} taking for $X_S$ the affine toric variety given by the monoid $S=(\sigma^\vee\cap M)\setminus\mathcal{H}$. If $X_\sigma=\AF^2$ the theorem follows from \cite[Proposition~6.2]{liendo2018characterization}. 

If $X_\sigma=\GM^2$, then by Remark~\ref{torus-no-nonnormal}, there are no non-normal affine toric varieties with normalization $X_\sigma$ so the theorem holds trivially. 

If $X_\sigma=\GM\times \AF^1$, we claim that $\Aut(X_\sigma)=(\ZZ/2\ZZ\ltimes \GM)\ltimes \left(\KK[t,t^{-1}]^*\ltimes\KK[t,t^{-1}]\right)$, where the factor $\ZZ/2\ZZ\ltimes \GM$ corresponds to the automorphism of $\GM$ and the factor $\KK[t,t^{-1}]^*\ltimes\KK[t,t^{-1}]$ corresponds to automorphisms of $\GM\times \AF^1$ that preserve the $\AF^1$-fibration fiberwise. Indeed, let $\varphi\in \Aut(X_\sigma)$. The semidirect product structure comes from the fact that the composition $\pr_1\circ\varphi\colon \GM\times\AF^1\to \GM$ is constant on the fibers $\{t\}\times \AF^1$ with $t \in \GM$ since there are no non-constant maps $\AF^1\to \GM$. On the other hand, $\KK[S]$ does not have any homogeneous locally nilpotent derivation by Lemma~\ref{no-GA} and so $X_S$ it does not admit any $\GA$-action by Theorem~\ref{LND-decomposition}. Hence, by Lemma~\ref{lifting-to-normalization}, we obtain that $\Aut(X_S)\subseteq (\ZZ/2\ZZ\ltimes \GM)\ltimes\KK[t,t^{-1}]^*$ and so $\Aut(X_\sigma)$ is not isomorphic $\Aut(X_S)$ ending the proof.
\end{proof}

\bibliographystyle{alpha}
\bibliography{ref}

\end{document}